\documentclass[11pt,reqno]{amsart}
\usepackage[usenames,dvipsnames,svgnames,table]{xcolor}
\usepackage{amssymb}
\usepackage{amsfonts}
\usepackage{amsmath,amscd}
\usepackage{mathrsfs}
\usepackage{verbatim}
\usepackage{enumitem}

\usepackage{mathtools}
\usepackage[all]{xy}
\usepackage{hyperref}

\newtheorem{theorem}{Theorem}[section]
\newtheorem{lemma}[theorem]{Lemma}     
\newtheorem{corollary}[theorem]{Corollary}
\newtheorem{proposition}[theorem]{Proposition}
\newtheorem*{theorem*}{Theorem}
\newtheorem*{corollary*}{Corollary}

\theoremstyle{definition}
\newtheorem{remark}[theorem]{Remark}
\newtheorem{example}[theorem]{Example}
\newtheorem{notation}[theorem]{Notation}

\newcommand{\mbk}{\mbox{$\Bbbk$}}

\newcommand{\mbn}{\mbox{$\mathbb{N}$}}

\newcommand{\se}{\mbox{$\langle$}}
\newcommand{\sd}{\mbox{$\rangle$}}

\newcommand{\rank}{\mbox{${\rm rank}$}}
\newcommand{\card}{\mbox{${\rm card}$}}

\title[Binomial determinants]{Binomial determinants: some closed formulae}

\author[L. Gonz\'alez]{Laura Gonz\'alez}
\author[F. Planas-Vilanova]{Francesc Planas-Vilanova}
\address{Departament de Matem\`atiques, Universitat Polit\`ecnica de Catalunya. 
Diagonal 647, ETSEIB, E-08028 Barcelona.}
\email{laura.gonzalez.hernandez@upc.edu}
\email{francesc.planas@upc.edu}
\thanks{Partially supported by projects reference PID2019-103849GB-I00 and PID2023-146936NB-I00 funded by the Spanish State Agency MICIU/AEI/10.13039/501100011033 and by AGAUR grant 2021 SGR 00603}

\date{\today}

\subjclass[2020]{15B36,15A15,05A10}

\begin{document}

\begin{abstract}
This paper is intended to give closed formulae for binomial determinants with consecutive or almost consecutive rows or columns, as well as calculating the generator of left nullspaces defined by some binomial matrices.  In the meantime, we reprove, by different means, the positivity of binomial determinants shown by Gessel and Viennot.
\end{abstract}

\maketitle 

\section{Introduction}\label{sec-intro}

The genesis of this paper is in the study of the work \cite{moh1}, where Moh finds a family of prime ideals 
$P_n$, $n$ odd,  in the power series ring $\mbk[[x,y,z]]$, $x,y,z$ variables over a field $\mbk$ of characteristic zero, such that each $P_n$ needs at least $n$ generators. Concretely, the ideal $P_n$ is the kernel of the map $\rho_n: \mbk[[x, y, z]]\rightarrow \mbk[[t]]$ defined by $\rho_n(x)=t^{nm}+t^{nm+\lambda}$, $\rho_n(y)=t^{(n+1)m}$, 
$\rho_n(z)=t^{(n+2)m}$, where $m=(n+1)/2$ and $\lambda$ is an integer greater than $n(n+1)m$, with $(\lambda, m)=1$. His proof relies in several technical intermediate results, one of them describing the kernel of a linear map given 
by a binomial matrix (see \cite[Theorem~4.2]{moh1}). Moh's paper is an invitation to deep in these matters as he leaves some open questions, such as to avoid the hypothesis on the field to be of characteristic zero, to avoid the restriction $n$ is odd, or to modify the exponents of the map $\rho_n$, so that one can find explicitly the generators of these primes ideals $P_n$ (see, e.g., in these directions \cite{gp1}, \cite{gp3} and \cite{mss}).

Pursuing this goal, the authors of the present manuscript rapidly run into the need 
to calculate the (left) nullspaces of some binomial matrices. 
Here the excellent work of Gessel and Viennot \cite{gv} has become a magnificent source of inspiration, 
though our purposes are distinct. In \cite{gv}, Gessel and Viennot, among several other results, prove the positivity of binomial determinants and give closed formulae when the rows or the columns of the binomial matrix are consecutive (see \cite[Corollary~2, Proposition~10 and page~308]{moh1}). 

The main result of the present paper expresses a binomial determinant as the product of a positive rational number with the sum of binomial determinants of size one less (Theorem~\ref{size-reduction}). As a corollary, we reprove the positivity of binomial determinants (Corollary~\ref{cor-positivity}), a result obtained by Gessel and Viennot in \cite[Corollary~2]{gv}, there using a bijection between the binomial determinant and the number of certain non-intersecting paths.

Subsequently, we give closed formulae for binomial determinants when the rows or the columns are consecutive (Theorems~\ref{Jinterval} and \ref{Iinterval}). While the first formula coincides with the formula given in \cite[page~308]{gv}, the second one is expressed in different terms (see \cite[Proposition~10]{gv}). As a consequence, we recover a theorem of Moh (\cite[Theorem~2.2]{moh1}) about the binomial determinant when both rows and columns are consecutive (see Corollary~\ref{corollary-moh}). 

Binomial determinants with consecutive rows and almost consecutive columns and, vice versa, almost consecutive rows and consecutive columns are studied in Sections~\ref{sectionAlmostJ} and \ref{sectionAlmostI}. The closed formulae obtained in these two sections are used in Section~\ref{sectionNulls} to give simple explicit generators to left nullspaces of some binomial matrices, those which are of interest in generalizing the work of Moh (see \cite{gp3}). We finish the paper by commenting the consequences of the ``interchanging formula'', rows with columns, given by Gessel and Viennot in \cite[Proposition~14]{gv}.  

\section{Size reduction and positivity}\label{positivity}

We first collect all the terminology in use. 

\begin{notation}\label{notation-binomial}
Let $\mbn=\{0,1,2,\ldots\}$ be the set of non-negative integers. 

If $k,l\in\mbn$, $k\leq l$, let $[k,l]=\{k,k+1,\ldots,l\}$ stand for the set of consecutive natural
numbers from $k$ to $l$. When $k>l$, we understand $[k,l]=\emptyset$. 

Let $d,e\in\mbn$, $d,e\geq 1$. From now on, $I=\{i_1,\ldots,i_d\}$ and $J=\{j_1,\ldots,j_e\}$
are two subsets of non-negative integers, where we always assume that $0\leq
i_1<\ldots<i_d$ and $0\leq j_1<\ldots<j_e$. 

If $p\in\mbn$, $p\leq i_1$, set $I-p=\{i_1-p,\ldots,i_d-p\}$. 
If $q\in\mbn$, $q\geq i_d$, set $q-I=\{q-i_d,\ldots,q-i_1\}$.

When $d=e$, we will say that $J\leq I$ whenever $j_1\leq i_1,\ldots,j_d\leq i_d$. Moreover,
$J<I$ if, and only if $J\leq I$ and $J\neq I$.

Let $B=(b_{i,j})_{i,j\geq 0}$ be the infinite matrix of
binomial coefficients, where $b_{i,j}=\binom{i}{j}=\frac{i!}{j!\cdot(i-j)!}$.
If $j>i$, then $b_{i,j}=0$. Note that rows and columns of $B$ are counted starting from zero.

Let $B^I_J$ denote the $d\times e$ submatrix of the
binomial matrix $B$ defined by the (intersection of the) rows $i_1,\ldots,i_d$ 
and the columns $j_1,\ldots,j_e$ of $B$.

If $d=e$, set 
\begin{eqnarray*}
b^{I}_{J}=\det(B^I_J)=
\left|\begin{array}{ccc}b_{i_1,j_1}&\ldots&b_{i_1,j_d}\\\vdots&&\vdots\\b_{i_d,j_1}&\ldots&b_{i_d,j_d}\end{array}\right|\mbox{ or, following \cite{gv}, }
b^I_J=\binom{I}{J}=\left(\begin{array}{ccc}i_1&\ldots&i_d\\j_1&\ldots&j_d\end{array}\right).
\end{eqnarray*}

If $d=e$, let 
\begin{eqnarray*}
\pi^I_J=\frac{b_{i_1,j_1}\cdots b_{i_d,j_1}}{b_{j_1,j_1}\cdots b_{j_d,j_1}}.
\end{eqnarray*}
This quotient of binomial coefficients is a well-defined rational number (see also Remark~\ref{pinatural}). 
It appears, not with this terminology, in 
\cite[Theorem~2.2]{moh1}, as the expression of a binomial determinant. 
We later recover this result in Corollary~\ref{corollary-moh}. Observe that $\pi^I_J=0$ if and only if $i_1<j_1$.
Moreover, if $j_1=0$ or $J=I$, then $\pi^I_J=1$.
\end{notation}

The first result reduces the evaluation of $b^I_J$ to the case $J<I$, with $j_1=0$. 

\begin{lemma}[\sc Reduction to $J<I$, $j_1=0$]\label{reductionJ<I}
Let $I=\{i_1,\ldots,i_d\}$ and $J=\{j_1,\ldots,j_d\}$. 
\begin{itemize}
\item[$(a)$] If $J\not\leq I$, then $b^I_J=0$.
\item[$(b)$] If $J=I$, then $b^I_J=1$.
\item[$(c)$] If $J\leq I$, then $b^I_J=\pi^I_J\cdot b^{I-j_1}_{J-j_1}$.
\end{itemize}
\end{lemma}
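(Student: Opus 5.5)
The plan is to treat the three parts separately. Parts $(a)$ and $(b)$ follow from the zero pattern of $B$, namely $b_{i,j}=0$ for $j>i$. Part $(c)$ follows from a factorization of each entry of $B^I_J$ as a row factor times a column factor times a shifted binomial coefficient.

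For $(a)$, suppose $J\not\leq I$ and pick an index $k$ with $j_k>i_k$. For every row index $r\leq k$ and column index $s\geq k$ we have $i_r\leq i_k<j_k\leq j_s$, so $b_{i_r,j_s}=0$. Hence the first $k$ rows of $B^I_J$ are supported only on the first $k-1$ columns. They are therefore $k$ vectors lying in a space of dimension $k-1$, so they are linearly dependent and $b^I_J=0$. When $k=1$ the first row is simply zero.

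For $(b)$, when $J=I$ the entry in position $(r,s)$ is $b_{i_r,i_s}$. This vanishes for $s>r$ because then $i_s>i_r$, and it equals $b_{i_r,i_r}=1$ for $s=r$. So $B^I_I$ is lower unitriangular and $b^I_I=1$.

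For $(c)$, the key step is the elementary identity
\[
\binom{i}{j}\binom{j}{j_1}=\binom{i}{j_1}\binom{i-j_1}{j-j_1},\qquad i\geq j_1,\ j\geq j_1.
\]
Both sides equal $\frac{i!}{j_1!\,(j-j_1)!\,(i-j)!}$ when $j\leq i$, and both vanish when $j>i$. The hypothesis $J\leq I$ gives $i_r\geq i_1\geq j_1$ for every $r$, and trivially $j_s\geq j_1$ for every $s$. Hence
\[
b_{i_r,j_s}=\binom{i_r}{j_1}\,\binom{j_s}{j_1}^{-1}\,\binom{i_r-j_1}{j_s-j_1}
\]
for all $r,s$, and $\binom{j_s}{j_1}\neq 0$. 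We then pull the factor $\binom{i_r}{j_1}$ out of row $r$ and the factor $\binom{j_s}{j_1}^{-1}$ out of column $s$ by multilinearity of the determinant. This yields
\[
b^I_J=\frac{\prod_{r}\binom{i_r}{j_1}}{\prod_s\binom{j_s}{j_1}}\;\det\Bigl(\binom{i_r-j_1}{j_s-j_1}\Bigr)_{r,s}=\pi^I_J\cdot b^{I-j_1}_{J-j_1},
\]
where the sets $I-j_1$ and $J-j_1$ are defined since $j_1\leq i_1$.

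The only delicate point is checking that this identity remains valid when $j_s>i_r$, so that the zero entries of $B^I_J$ are also correctly factored. This holds because $j_s-j_1>i_r-j_1$ in that case. Everything else is routine.
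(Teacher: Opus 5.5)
Your proof is correct and is essentially the paper's: parts $(a)$ and $(b)$ use the same zero-pattern arguments, and for $(c)$ the paper also pulls a row factor and a column factor out of every entry by multilinearity. The only difference is that the paper applies $b_{p,q}=(p/q)\,b_{p-1,q-1}$ repeatedly, $j_1$ times, and then regroups the products into $b_{i_r,j_1}/b_{j_s,j_1}$. Your single identity $\binom{i}{j}\binom{j}{j_1}=\binom{i}{j_1}\binom{i-j_1}{j-j_1}$ does this in one step and makes the paper's separate treatment of the cases $J=I$ and $j_1=0$ unnecessary.
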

\begin{proof}
Suppose that $J\not\leq I$, so there exists $k$, $1\leq k\leq d$, with
$j_k>i_k$. Then $b_{i_k,j_k}=0$. Since 
$i_1<\ldots <i_{k-1}<i_k<j_k<j_{k+1}<\ldots <j_d$, 
it follows that $b_{i_r,j_s}=0$, for
all $1\leq r\leq k$ and for all $k\leq s\leq d$. Therefore, 
\[
B^I_J=\left(\begin{array}{c|c}B_1&B_2\\\hline 
B_3&B_4\end{array}\right),
\]
for some submatrices $B_1$, $B_2$, $B_3$ and $B_4$, 
where $B_1$ is $k\times (k-1)$, $B_2=0$ is $k\times (d-k+1)$,
$B_3$ is $(d-k)\times (k-1)$ and $B_4$ is $(d-k)\times (d-k+1)$. 
Observe that 
$\rank(B_1|B_2)=\rank(B_1|0)=\rank(B_1)\leq k-1$, 
which implies that $\rank(B^I_J)\leq d-1$, so $b^I_J=0$. 
This proves $(a)$. It is readily seen that, if $J=I$, then $B^I_J$
is a lower triangular matrix with entries equal to 1 in the diagonal. 
So, in this case, $b^I_J=1$, which shows $(b)$.

Suppose that $J=I$. By $(b)$, $b^I_J=1$. Moreover, as said before, $\pi^I_J=1$. Hence, $I-j_1=J-j_1$ and, by $(b)$ again, $b^{I-j_1}_{J-j_1}=1$. Therefore, $b^I_J=1=\pi^I_Jb^{I-j_1}_{J-j_1}$ and $(c)$ holds in this particular case.

Suppose that $J<I$. If $j_1=0$, then $\pi^I_J=1$, $I-j_1=I$, $J-j_1=J$ and $b^I_J=\pi^I_J\cdot b^{I-j_1}_{J-j_1}$. 
Suppose that $j_1\neq 0$. Using
$b_{p,q}=(p/q)b_{p-1,q-1}$, one can factor out $i_r$ from the $r$-th
row of $b^I_J$ and $1/j_s$ from the $s$-th column of $b^I_J$. 
Proceeding recursively, we get:
\begin{eqnarray*}
b^I_J&&=\frac{i_1\cdots i_d}{j_1\cdots j_d}b^{I-1}_{J-1}=
\frac{i_1\cdots i_d}{j_1\cdots j_d} \frac{(i_1-1)\cdots
  (i_d-1)}{(j_1-1)\cdots (j_d-1)}b^{I-2}_{J-2}\\&&= \frac{i_1\cdots
  i_d}{j_1\cdots j_d} \frac{(i_1-1)\cdots (i_d-1)}{(j_1-1)\cdots
  (j_d-1)}\cdots \frac{(i_1-(j_1-1))\cdots
  (i_d-(j_1-1))}{(j_1-(j_1-1))\cdots
  (j_d-(j_1-1))}b^{I-j_1}_{J-j_1}\\&&= \frac{i_1(i_1-1)\cdots
  (i_1-(j_1-1))}{j_1(j_1-1)\cdots (j_1-(j_1-1))} \cdots
\frac{i_d(i_d-1)\cdots (i_d-(j_1-1))}{j_d(j_d-1)\cdots (j_d-(j_1-1))}
b^{I-j_1}_{J-j_1}.
\end{eqnarray*}
Dividing by $j_1!$ on the numerator and the denominator of each
fraction:
\begin{eqnarray*}
\frac{i_r(i_r-1)\cdots (i_r-(j_1-1))}{j_r(j_r-1)\cdots (j_r-(j_1-1))}=
\frac{\frac{i_r(i_r-1)\cdots (i_r-(j_1-1))}{j_1!}}
     {\frac{j_r(j_r-1)\cdots (j_r-(j_1-1))}{j_1!}}=
     \frac{\binom{i_r}{j_1}}{\binom{j_r}{j_1}}=\frac{b_{i_r,j_1}}{b_{j_r,j_1}}.
\end{eqnarray*}
Therefore, $b^I_J=\frac{b_{i_1,j_1}b_{i_2,j_1}\cdots b_{i_d,j_1}}
{b_{j_1,j_1}b_{j_2,j_1}\cdots b_{j_d,j_1}}b^{I-j_1}_{J-j_1}=\pi^I_J\cdot b^{I-j_1}_{J-j_1}$, which
proves item $(c)$. 
\end{proof}

The key idea in the proof of $(c)$ is the one used in \cite[Lemma~8]{gv}. 
Next we state and prove the main result of the present note.

\begin{theorem}[\sc Size reduction]\label{size-reduction}
Let $I=\{i_1,\ldots,i_d\}$ and $J=\{j_1,\ldots,j_d\}$. Suppose that $J\leq I$. Then 
\begin{eqnarray*}
b^I_J=\pi^I_J\cdot\left(\sum_{(k_2,\ldots,k_d)\in [i_1-j_1,i_2-j_1-1]\times\ldots\times[i_{d-1}-j_1,i_{d}-j_1-1]} 
b^{\{k_2,\ldots,k_d\}}_{\{j_2-j_1-1,\ldots,j_d-j_1-1\}}\right).
\end{eqnarray*}
\end{theorem}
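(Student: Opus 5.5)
By Lemma~\ref{reductionJ<I}(c), $b^I_J=\pi^I_J\, b^{I-j_1}_{J-j_1}$. Replacing $I,J$ by $I-j_1,J-j_1$ preserves both $J\leq I$ and the index ranges in the sum. So it suffices to treat $j_1=0$ and prove
\[
b^I_J=\sum_{(k_2,\ldots,k_d)\in [i_1,i_2-1]\times\cdots\times[i_{d-1},i_d-1]} b^{\{k_2,\ldots,k_d\}}_{\{j_2-1,\ldots,j_d-1\}}.
\]
Once $j_1=0$, the first column of $B^I_J$ is the all-ones column $(1,\ldots,1)^T$.

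The plan is to subtract consecutive rows. For $r=d,d-1,\ldots,2$, replace row $r$ by row $r$ minus row $r-1$; this does not change the determinant. The first column becomes $(1,0,\ldots,0)^T$, so expanding along it gives $b^I_J=\det(M)$. Here $M$ is the $(d-1)\times(d-1)$ matrix with entries
\[
M_{r,s}=\binom{i_r}{j_s}-\binom{i_{r-1}}{j_s}, \qquad 2\leq r,s\leq d .
\]
Since $j_s\geq 1$ for $s\geq 2$, the hockey-stick identity gives
\[
\binom{i_r}{j_s}-\binom{i_{r-1}}{j_s}=\sum_{k=i_{r-1}}^{i_r-1}\binom{k}{j_s-1}.
\]
Thus row $r$ of $M$ is the sum, over $k\in[i_{r-1},i_r-1]$, of the row vectors $\bigl(\binom{k}{j_2-1},\ldots,\binom{k}{j_d-1}\bigr)$.

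By multilinearity of the determinant in rows, $\det M$ equals the sum over all $(k_2,\ldots,k_d)\in[i_1,i_2-1]\times\cdots\times[i_{d-1},i_d-1]$ of $\det\bigl(\binom{k_r}{j_s-1}\bigr)_{r,s}$. The intervals are consecutive and disjoint, so $k_2<k_3<\cdots<k_d$ holds automatically. Each such determinant is therefore exactly $b^{\{k_2,\ldots,k_d\}}_{\{j_2-1,\ldots,j_d-1\}}$ in the paper's notation, with increasing rows and columns. Some terms may vanish by Lemma~\ref{reductionJ<I}(a), which does no harm. Multiplying back by $\pi^I_J$ gives the theorem.

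The main obstacle is bookkeeping rather than ideas. We need the hockey-stick identity to apply, which requires $j_s-1\geq 0$; this holds because $j_1=0<j_2$. We also need the $k$'s to be strictly increasing so that the summands are binomial determinants in the sense of Notation~\ref{notation-binomial}. Finally, the degenerate case $d=1$ has an empty sum, which must be interpreted as the empty product, i.e.\ the $0\times 0$ determinant $1$; this is consistent with $b^{\{i_1\}}_{\{j_1\}}=\pi^I_J$.
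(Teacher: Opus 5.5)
Your proof is correct and is essentially the paper's argument: after reducing to $j_1=0$ via Lemma~\ref{reductionJ<I}$(c)$, the paper repeatedly applies Pascal's rule to each row, stopping when the leftover row coincides with the row above so that its determinant vanishes; this is exactly your step of subtracting row $r-1$ and applying the hockey-stick identity, after which the paper also uses multilinearity and expands along the first column. One small wording fix: for $d=1$ the index set is the empty Cartesian product, which has exactly one element (the empty tuple), so the sum is not empty but consists of a single $0\times 0$ determinant equal to $1$, giving $b^{\{i_1\}}_{\{j_1\}}=\pi^I_J$ as you say.
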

\begin{proof}
First, suppose that $j_1=0$. Since $b_{p,q}=b_{p-1,q-1}+b_{p-1,q}$, it follows that
\begin{eqnarray*}
b^{\{p\}}_J=(b^p_0,b^{p}_{j_2},\ldots,b^{p}_{j_d})=
(0,b^{p-1}_{j_2-1},\ldots,b^{p-1}_{j_d-1})+(b^{p-1}_0,b^{p-1}_{j_2},\ldots,b^{p-1}_{j_d}).
\end{eqnarray*}
For $2\leq s\leq d$,
\begin{eqnarray*}
\left|\begin{array}{ccccc}
*&*&&*\\
b^{i_{s-1}}_{0}&b^{i_{s-1}}_{j_2}&\ldots&b^{i_{s-1}}_{j_d}\\
b^{i_{s}}_{0}&b^{i_{s}}_{j_2}&\ldots&b^{i_{s}}_{j_d}\\
*&*&&*
\end{array}\right|=
\left|\begin{array}{ccccc}
*&*&&*\\
b^{i_{s-1}}_{0}&b^{i_{s-1}}_{j_2}&\ldots&b^{i_{s-1}}_{j_d}\\
0&b^{i_{s}-1}_{j_2-1}&\ldots&b^{i_{s}-1}_{j_d-1}\\
*&*&&*
\end{array}\right|+
\left|\begin{array}{ccccc}
*&*&&*\\
b^{i_{s-1}}_{0}&b^{i_{s-1}}_{j_2}&\ldots&b^{i_{s-1}}_{j_d}\\
b^{i_{s}-1}_{0}&b^{i_{s}-1}_{j_2}&\ldots&b^{i_{s}-1}_{j_d}\\
*&*&&*
\end{array}\right|.
\end{eqnarray*}
If $i_{s-1}=i_{s}-1$, the most right hand determinant is zero. If $i_{s-1}<i_{s}-1$, proceed
as before:
\begin{eqnarray*}
\left|\begin{array}{ccccc}
*&*&&*\\
b^{i_{s-1}}_{0}&b^{i_{s-1}}_{j_2}&\ldots&b^{i_{s-1}}_{j_d}\\
b^{i_{s}-1}_{0}&b^{i_{s}-1}_{j_2}&\ldots&b^{i_{s}-1}_{j_d}\\
*&*&&*
\end{array}\right|=
\left|\begin{array}{ccccc}
*&*&&*\\
b^{i_{s-1}}_{0}&b^{i_{s-1}}_{j_2}&\ldots&b^{i_{s-1}}_{j_d}\\
0&b^{i_{s}-2}_{j_2-1}&\ldots&b^{i_{s}-2}_{j_d-1}\\
*&*&&*
\end{array}\right|+
\left|\begin{array}{ccccc}
*&*&&*\\
b^{i_{s-1}}_{0}&b^{i_{s-1}}_{j_2}&\ldots&b^{i_{s-1}}_{j_d}\\
b^{i_{s}-2}_{0}&b^{i_{s}-2}_{j_2}&\ldots&b^{i_{s}-2}_{j_d}\\
*&*&&*
\end{array}\right|.
\end{eqnarray*}
Recursively, we get a decomposition in $i_s-i_{s-1}$ summands. Namely:
\begin{eqnarray*}
\left|\begin{array}{ccccc}
*&*&&*\\
b^{i_{s-1}}_{0}&b^{i_{s-1}}_{j_2}&\ldots&b^{i_{s-1}}_{j_d}\\
b^{i_{s}}_{0}&b^{i_{s}}_{j_2}&\ldots&b^{i_{s}}_{j_d}\\
*&*&&*
\end{array}\right|=
\sum_{k=i_{s-1}}^{i_s-1}
\left|\begin{array}{ccccc}
*&*&&*\\
b^{i_{s-1}}_{0}&b^{i_{s-1}}_{j_2}&\ldots&b^{i_{s-1}}_{j_d}\\
0&b^{k}_{j_2-1}&\ldots&b^{k}_{j_d-1}\\
*&*&&*
\end{array}\right|.
\end{eqnarray*}
Now, given the determinant $b^I_J$, apply this decomposition, first to the last row. Then, proceed in the same way, row by row, in ascending order. Finally, 
develop the determinant by the first column, so that:
\begin{multline*}
b^I_J=\sum_{k_d=i_{d-1}}^{i_d-1}
\left|\begin{array}{ccccc}
*&*&&*\\
b^{i_{d-1}}_{0}&b^{i_{d-1}}_{j_2}&\ldots&b^{i_{d-1}}_{j_d}\\
0&b^{k_d}_{j_2-1}&\ldots&b^{k_d}_{j_d-1}
\end{array}\right|=
\sum_{k_d=i_{d-1}}^{i_d-1}
\sum_{k_{d-1}=i_{d-2}}^{i_{d-1}-1}
\left|\begin{array}{ccccc}
*&*&&*\\
b^{i_{d-2}}_{0}&b^{i_{d-2}}_{j_2}&\ldots&b^{i_{d-2}}_{j_d}\\
0&b^{k_{d-1}}_{j_2-1}&\ldots&b^{k_{d-1}}_{j_d-1}\\
0&b^{k_d}_{j_2-1}&\ldots&b^{k_d}_{j_d-1}
\end{array}\right|=\\
\ldots=\sum_{k_d=i_{d-1}}^{i_d-1}
\sum_{k_{d-1}=i_{d-2}}^{i_{d-1}-1}\ldots\sum_{k_2=i_{1}}^{i_2-1}
\left|\begin{array}{ccccc}
b^{i_{1}}_{0}&b^{i_{1}}_{j_2}&\ldots&b^{i_{1}}_{j_d}\\
0&b^{k_{2}}_{j_2-1}&\ldots&b^{k_{2}}_{j_d-1}\\
\vdots&\vdots&&\vdots\\
0&b^{k_{d-1}}_{j_2-1}&\ldots&b^{k_{d-1}}_{j_d-1}\\
0&b^{k_d}_{j_2-1}&\ldots&b^{k_d}_{j_d-1}
\end{array}\right|=
\sum_{s=2}^d
\sum_{k_{s}=i_{s-1}}^{i_{s}-1}
\left|\begin{array}{cccc}
b^{k_{2}}_{j_2-1}&\ldots&b^{k_{2}}_{j_d-1}\\
\vdots&&\vdots\\
b^{k_d}_{j_2-1}&\ldots&b^{k_d}_{j_d-1}
\end{array}\right|.
\end{multline*}
Since $j_1=0$, then $\pi^I_J=1$ and 
$b^I_J=
\pi^I_J\left(\sum_{s=2}^d\sum_{k_{s}=i_{s-1}}^{i_{s}-1}b^{\{k_2,\ldots,k_s\}}_{\{j_2-1,\ldots,j_d-1\}}\right)$. 
If $j_1\neq 0$, by Lemma~\ref{reductionJ<I}, then $b^I_J=\pi^I_Jb^{I'}_{J'}$, where $I'=\{i_1',\ldots,i_d'\}$, with $i'_s=i_s-j_1$ and 
$J'=\{j_1',\ldots,j_d'\}$, with $j_s'=j_s-j_1$. Since $j_1'=0$, applying the first case, we get
\begin{eqnarray*}
b^I_J=\pi^I_Jb^{I'}_{J'}=\pi^I_J\pi^{I'}_{J'}\left(\sum_{s=2}^d\sum_{k_{s}=
i'_{s-1}}^{i'_{s}-1}b^{\{k_2,\ldots,k_s\}}_{\{j'_2-1,\ldots,j'_d-1\}}\right)=
\pi^I_J\left(\sum_{s=2}^d\sum_{k_{s}=
i_{s-1}-j_1}^{i_{s}-j_1-1}b^{\{k_2,\ldots,k_s\}}_{\{j_2-j_1-1,\ldots,j_d-j_1-1\}}\right).
\end{eqnarray*}
\end{proof}

\begin{example}
Following the notation of \cite{gv}, and using Theorem~\ref{size-reduction}, we obtain, for instance, 
\begin{eqnarray*}
\left(\begin{array}{cccc}3&5&7&8\\0&3&5&7
\end{array}\right)=
\left(\begin{array}{ccc}3&5&7\\2&4&6
\end{array}\right)+
\left(\begin{array}{ccc}3&6&7\\2&4&6
\end{array}\right)+
\left(\begin{array}{ccc}4&5&7\\2&4&6
\end{array}\right)+
\left(\begin{array}{ccc}4&6&7\\2&4&6
\end{array}\right),
\end{eqnarray*}
where $(k_2,k_3,k_4)\in[3,4]\times [5,6]\times \{7\}=\{(3,5,7),(3,6,7),(4,5,7),(4,6,7)\}$.
Some of the determinants appearing in the decomposition given by Theorem~\ref{size-reduction} might be zero. This is the case with the first two summands in the next example.  
\begin{eqnarray*}
\left(\begin{array}{cccc}1&5&7&8\\0&3&5&7
\end{array}\right)&=&
\left(\begin{array}{ccc}1&5&7\\2&4&6
\end{array}\right)+
\left(\begin{array}{ccc}1&6&7\\2&4&6
\end{array}\right)+
\left(\begin{array}{ccc}2&5&7\\2&4&6
\end{array}\right)+
\left(\begin{array}{ccc}2&6&7\\2&4&6
\end{array}\right)\\
&&\left(\begin{array}{ccc}3&5&7\\2&4&6
\end{array}\right)+
\left(\begin{array}{ccc}3&6&7\\2&4&6
\end{array}\right)+
\left(\begin{array}{ccc}4&5&7\\2&4&6
\end{array}\right)+
\left(\begin{array}{ccc}4&6&7\\2&4&6
\end{array}\right).
\end{eqnarray*}
\end{example}

As a consequence of Theorem~\ref{size-reduction}, we get an alternative proof to the positivity of the binomial determinants, when $J\leq I$, given by Gessel and Viennot in \cite[Corollary~2]{gv}.

\begin{corollary}[\sc Positivity]\label{cor-positivity} 
Let $I=\{i_1,\ldots,i_d\}$ and $J=\{j_1,\ldots,j_d\}$. Then $b^I_J\geq 0$. 
Moreover, $b^I_J>0$ if, and only if, $J\leq I$.
\end{corollary}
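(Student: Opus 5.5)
We argue by induction on the size $d$, using Lemma~\ref{reductionJ<I} and Theorem~\ref{size-reduction}. If $J\not\leq I$, then $b^I_J=0$ by Lemma~\ref{reductionJ<I}(a). So it suffices to show that $J\leq I$ implies $b^I_J>0$. Together these give both $b^I_J\geq 0$ and the equivalence.

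For $d=1$, the condition $J\leq I$ means $j_1\leq i_1$, and then $b^I_J=\binom{i_1}{j_1}\geq 1$. For $d\geq 2$, assume the claim for size $d-1$ and let $J\leq I$. Since $j_1\leq i_1$, the factor $\pi^I_J$ is positive. Indeed, $b_{i_r,j_1}\geq 1$ because $i_r\geq i_1\geq j_1$, and $b_{j_r,j_1}\geq 1$ because $j_r\geq j_1$.

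By Theorem~\ref{size-reduction}, $b^I_J$ is $\pi^I_J$ times a sum of determinants $b^{K}_{J'}$ of size $d-1$, where $J'=\{j_2-j_1-1,\ldots,j_d-j_1-1\}$ and $K=\{k_2,\ldots,k_d\}$ with $k_s\in[i_{s-1}-j_1,i_s-j_1-1]$. Each such $K$ is a valid strictly increasing set: $k_s\leq i_s-j_1-1<i_s-j_1\leq k_{s+1}$, and $k_2\geq i_1-j_1\geq 0$. Also $J'$ consists of non-negative integers, since $j_s>j_1$ for $s\geq 2$. By the induction hypothesis combined with Lemma~\ref{reductionJ<I}(a), every summand is $\geq 0$. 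Hence $b^I_J\geq 0$.

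The main step is strict positivity, which requires one summand with $J'\leq K$. Choose the largest index in each interval, $k_s=i_s-j_1-1$; the interval is nonempty because $i_{s-1}<i_s$. Then $k_s\geq j_s-j_1-1$ exactly when $i_s\geq j_s$, which holds since $J\leq I$. So $J'\leq K$, and by the induction hypothesis $b^K_{J'}>0$. Therefore $b^I_J\geq \pi^I_J\, b^K_{J'}>0$.
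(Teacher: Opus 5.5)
Your proof is correct and follows essentially the same route as the paper: induction on $d$ via Theorem~\ref{size-reduction}, using Lemma~\ref{reductionJ<I} and the induction hypothesis to show every summand is nonnegative, and the choice $K=\{i_2-j_1-1,\ldots,i_d-j_1-1\}$ to produce a strictly positive summand. Your inductive claim ``$J\leq I$ implies $b^I_J>0$'' (rather than the paper's $J<I$) also covers the case $J'=K$, which can occur when $I$ and $J$ differ only in the first entry, without a separate appeal to Lemma~\ref{reductionJ<I}(b).
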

\begin{proof}
By Lemma~\ref{reductionJ<I}, if $J\not\leq I$, then $b^I_J=0$ and, if $J=I$, then $b^I_J=1$. 
Thus, it remains to prove that if $J<I$, then $b^I_J>0$.
We proceed by induction on $d\geq 1$. If $d=1$, then $I=\{i_1\}$, $J=\{j_1\}$. Since $J<I$, 
it follows that $j_1<i_1$ and $b^I_J=b^{i_1}_{j_1}>0$. 
Suppose that $d>1$ and that the induction hypothesis holds.
By Theorem~\ref{size-reduction}, $b^I_J$ is the product of $\pi^I_J$, which satifsfies $\pi^I_J>0$, with
the sum of the binomial determinants $b^{\{k_2,\ldots,k_d\}}_{\{j_2-j_1-1,\ldots,j_d-j_1-1\}}$, 
where 
\begin{eqnarray*}
(k_2,\ldots,k_d)\in [i_1-j_1,i_2-j_1-1]\times\ldots\times [i_{d-1}-j_1,i_{d}-j_1-1].
\end{eqnarray*}
By Lemma~\ref{reductionJ<I},
\begin{itemize}
\item if ${\{j_2-j_1-1,\ldots,j_d-j_1-1\}}\not\leq {\{k_2,\ldots,k_d\}}$, then 
$b^{\{k_2,\ldots,k_d\}}_{\{j_2-j_1-1,\ldots,j_d-j_1-1\}}=0$;
\item if ${\{j_2-j_1-1,\ldots,j_d-j_1-1\}}={\{k_2,\ldots,k_d\}}$, then 
$b^{\{k_2,\ldots,k_d\}}_{\{j_2-j_1-1,\ldots,j_d-j_1-1\}}=1$.
\end{itemize}
By the induction hypothesis,
\begin{itemize}
\item if ${\{j_2-j_1-1,\ldots,j_d-j_1-1\}}<{\{k_2,\ldots,k_d\}}$, then
$b^{\{k_2,\ldots,k_d\}}_{\{j_2-j_1-1,\ldots,j_d-j_1-1\}}>0$.
\end{itemize}
A possible choice of ${\{k_2,\ldots,k_d\}}$ is the subset $\{i_2-j_1-1,\ldots,i_d-j_1-1\}$. Since $J<I$, 
it follows that ${\{j_2-j_1-1,\ldots,j_d-j_1-1\}}<\{i_2-j_1-1,\ldots,i_d-j_1-1\}$ and
$b^{\{i_2-j_1-1,\ldots,i_d-j_1-1\}}_{\{j_2-j_1-1,\ldots,j_d-j_1-1\}}>0$. We conclude that all the terms are either 
zero or $1$, and that there is at least one summand which is strictly positive. Thus, $b^I_J>0$.
\end{proof}

As an immediate consequence, we deduce that some submatrices of the binomial matrix have always maximal rank. 

\begin{corollary}\label{max-rank}
Let $d\geq 1$, $I=[i,i+d-1]$, $H=[0,i+d-1]$ and $J=\{j_1,\ldots,j_d\}\subset H$. 
Then $J\leq I$ and $b^I_J>0$. In particular, 
every $d\times d$ submatrix of $B^I_H$ has rank $d$.
\end{corollary}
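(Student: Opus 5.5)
The plan has two steps. First I show that $J\leq I$ by a counting argument, and then I read off positivity and the rank statement from Corollary~\ref{cor-positivity}.

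Write $I=\{i_1,\ldots,i_d\}$ with $i_s=i+s-1$. Since $J=\{j_1,\ldots,j_d\}\subset H=[0,i+d-1]$ and $j_1<\cdots<j_d$ are distinct integers, the elements $j_{s+1},\ldots,j_d$ are $d-s$ distinct integers lying strictly above $j_s$ and at most $i+d-1$. Hence $j_s+(d-s)\leq j_d\leq i+d-1$, which gives $j_s\leq i+s-1=i_s$ for every $1\leq s\leq d$. That is precisely $J\leq I$.

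Corollary~\ref{cor-positivity} then yields $b^I_J>0$. This also covers the case $J=I$, where Lemma~\ref{reductionJ<I}(b) gives the value $1$.

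For the last assertion, any $d\times d$ submatrix of $B^I_H$ uses all $d$ rows of $I$, since $B^I_H$ has exactly $d$ rows, together with some $d$ columns $J\subset H$. So the submatrix is exactly $B^I_J$, and its determinant is $b^I_J\neq 0$, so it has rank $d$.

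There is no real obstacle here. The only point needing care is the counting inequality $j_s\leq i+s-1$, which rests on the distinctness of the $j$'s and the upper bound $i+d-1$ of $H$.
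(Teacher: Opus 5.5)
Your proposal is correct and follows the same route as the paper: deduce $J\leq I$ from $j_d\leq i+d-1$ and the fact that $I$ is an interval, then invoke Corollary~\ref{cor-positivity} for $b^I_J>0$ and conclude that every $d\times d$ submatrix $B^I_J$ of $B^I_H$ is nonsingular. The only difference is that you write out the counting inequality $j_s+(d-s)\leq j_d\leq i+d-1$, which the paper leaves implicit in the word ``necessarily.''
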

\begin{proof}
Since $J\subset H$, then $j_d\leq i+d-1$ and, since $I$ is an interval, necessarily, $J\leq I$. 
By Corollary~\ref{cor-positivity}, $b^I_J>0$. Since this holds for any $J=\{j_1,\ldots,j_d\}\subset H$, it follows that
every $d\times d$ submatrix of $B^I_H$ has rank $d$.
\end{proof}

\section{A closed formula for consecutive columns}

We start the section by recovering a closed formula for the binomial determinant when $J$ is an interval (see \cite[page~308]{gv}).

\begin{theorem}[\sc Consecutive columns]\label{Jinterval}
Let $I=\{i_1,\ldots,i_d\}$ and $J=[j,j+d-1]$. Then,
\begin{eqnarray*}
b^I_J=\pi^I_J\cdot\frac{\prod_{1\leq k<\ell\leq d}(i_{\ell}-i_k)}{\prod_{k=0}^{d-1}k!}=
\frac{b_{i_1,j}\cdots b_{i_d,j}}{b_{j,j}\cdots b_{j+d-1,j}}
\cdot\frac{\prod_{1\leq k<\ell\leq d}(i_{\ell}-i_k)}{\prod_{k=0}^{d-1}k!}.
\end{eqnarray*}
\end{theorem}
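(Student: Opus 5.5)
We first reduce to the case $j=0$ using Lemma~\ref{reductionJ<I}. If $J\not\leq I$, then $i_k<j+k-1$ for some $k$, so $i_1<j$ and $\pi^I_J=0$, and both sides vanish. Note that the formula's right-hand side can be nonzero only if $\pi^I_J\neq 0$, that is, $i_1\ge j$; and if $i_1\geq j$ then $i_k\geq i_1+k-1\ge j+k-1$, so $J\le I$ automatically. Thus either $J\not\leq I$ and both sides are $0$ by Lemma~\ref{reductionJ<I}$(a)$, or $J\leq I$ and Lemma~\ref{reductionJ<I}$(c)$ gives $b^I_J=\pi^I_J\, b^{I-j}_{[0,d-1]}$. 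The Vandermonde-type factor $\prod_{k<\ell}(i_\ell-i_k)$ is invariant under the shift $I\mapsto I-j$. So it suffices to prove
\begin{eqnarray*}
b^{I}_{[0,d-1]}=\frac{\prod_{1\leq k<\ell\leq d}(i_\ell-i_k)}{\prod_{k=0}^{d-1}k!}
\end{eqnarray*}
for an arbitrary $I=\{i_1<\cdots<i_d\}\subset\mbn$.

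For this case we use polynomial column operations rather than Theorem~\ref{size-reduction}. The entry in row $r$ and column $s$ (for $s=0,\ldots,d-1$) is $\binom{i_r}{s}=\frac{1}{s!}\,i_r(i_r-1)\cdots(i_r-s+1)$. This is $\frac{1}{s!}$ times a monic polynomial of degree $s$ in $i_r$, and the polynomial is the same for every row. We factor $\frac{1}{s!}$ out of column $s$, which produces the denominator $\prod_{k=0}^{d-1}k!$. The remaining matrix has column $s$ equal to $p_s(i_r)$ with $p_s(x)=x^s+(\text{lower terms})$. Subtracting suitable combinations of earlier columns, inductively in $s$, turns $p_s$ into $x^s$ without changing the determinant. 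This leaves the Vandermonde determinant $\det(i_r^{\,s})=\prod_{k<\ell}(i_\ell-i_k)$.

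Finally, we rewrite $\pi^I_J$ for $J=[j,j+d-1]$ as $\frac{b_{i_1,j}\cdots b_{i_d,j}}{b_{j,j}\cdots b_{j+d-1,j}}$, which is just its definition, giving the second expression. The main point needing care is the case $J\not\le I$, where one must check that $\pi^I_J=0$; this holds precisely because $J$ is an interval. Alternatively, the polynomial argument works directly for arbitrary $j$, since $\binom{i}{j+s}$ is not monic-triangular in $s$. The reduction via Lemma~\ref{reductionJ<I} therefore seems the cleanest route. There is no real obstacle; the Vandermonde step is routine.
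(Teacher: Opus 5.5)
Your proposal is correct and follows essentially the paper's proof. You reduce to $J=[0,d-1]$ via Lemma~\ref{reductionJ<I}$(c)$, factor $1/s!$ out of each column, and use column operations to reach the Vandermonde determinant. Your explicit handling of the case $J\not\leq I$ (where $i_1<j$ forces $\pi^I_J=0$, so both sides vanish) fills a small point the paper passes over, since Lemma~\ref{reductionJ<I}$(c)$ requires $J\leq I$. The aside about treating arbitrary $j$ directly is self-contradictory as written (you presumably mean the argument does \emph{not} apply verbatim), but it plays no role in the proof.
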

\begin{proof}
By Lemma~\ref{reductionJ<I}, $b^I_J=\pi^I_Jb^{I-j}_{[0,d-1]}$. Thus, we can suppose $J=[0,d-1]$. Note that, in such a case
\begin{eqnarray*}
b^I_J&=&\frac{1}{0!1!\cdots (d-1)!}\left|\begin{array}{ccccc}
1&i_1&i_1(i_1-1)&\ldots&i_1(i_1-1)\cdots(i_1-d+2)\\
\vdots&\vdots&\vdots&&\vdots\\
1&i_d&i_d(i_d-1)&\ldots&i_d(i_d-1)\cdots(i_d-d+2)
\end{array}\right|=\\
&=&\frac{1}{\prod_{k=0}^{d-1}k!}\left|\begin{array}{ccccc}
1&i_1&i_1^2-i_1&\ldots&i_1^{d-1}+\sum_{k=1}^{d-2}\lambda_ki_1^k\\
\vdots&\vdots&\vdots&&\vdots\\
1&i_d&i_d^2-i_d&\ldots&i_1^{d-1}+\sum_{k=1}^{d-2}\lambda_ki_1^k
\end{array}\right|=\frac{1}{\prod_{k=0}^{d-1}k!}\left|\begin{array}{ccccc}
1&i_1&i_1^2&\ldots&i_1^{d-1}\\
\vdots&\vdots&\vdots&&\vdots\\
1&i_d&i_d^2&\ldots&i_1^{d-1}\end{array}\right|.
\end{eqnarray*}
The last step is as follows: start by adding to the third colum, the second one. Then, add to the fourth column three times the new second one and subtract two times the second one. Proceeding in this way, recursively, we arrive to the final Vandermonde determinant.
\end{proof}

As a consequence of both Theorems~\ref{size-reduction} and \ref{Jinterval}, we obtain the following curious result. 

\begin{corollary}
Let $0\leq i_1<\ldots<i_d$ a finite sequence of non-negative integers. Then,
\begin{eqnarray*}
\sum_{(k_{1,2},\ldots,k_{1,d})}
\sum_{(k_{2,3},\ldots,k_{2,d})}\ldots
\sum_{(k_{d-2,d-1},k_{d-2,d})}\sum_{k_{d-1,d}}1=\frac{\prod_{1\leq k<\ell\leq d}(i_{\ell}-i_k)}{\prod_{k=0}^{d-1}k!},
\end{eqnarray*}
where 
\begin{eqnarray*}
&&(k_{1,2},\ldots,k_{1,d})\in [i_1,i_2-1]\times\ldots\times[i_{d-1},i_{d}-1],\\
&&(k_{2,3},\ldots,k_{2,d})\in [k_{1,2},k_{1,3}-1]\times\ldots\times[k_{1,m-1},k_{1,d}-1],\\
&&\ldots\\
&&(k_{d-2,d-1},k_{d-2,d})\in [k_{d-3,d-2},k_{d-3,d-1}-1]\times[k_{d-3,d-1},k_{d-3,d}-1],\\
&&k_{d-1,d}\in [k_{d-2,d-1},k_{d-2,d}-1].
\end{eqnarray*}
\end{corollary}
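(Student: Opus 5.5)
The plan is to compute $b^I_{[0,d-1]}$ in two ways and compare. By Theorem~\ref{Jinterval} with $j=0$ we have $\pi^I_J=1$, so
\[
b^I_{[0,d-1]}=\frac{\prod_{1\leq k<\ell\leq d}(i_\ell-i_k)}{\prod_{k=0}^{d-1}k!}.
\]
It therefore suffices to show that iterating Theorem~\ref{size-reduction} $d-1$ times expresses $b^I_{[0,d-1]}$ exactly as the nested sum of $1$'s in the statement.

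The observation driving the iteration is that when $J=[0,d-1]$, the column set appearing in Theorem~\ref{size-reduction} is $\{j_2-j_1-1,\ldots,j_d-j_1-1\}=\{0,1,\ldots,d-2\}=[0,d-2]$. This is again an interval starting at $0$, so the prefactor $\pi$ equals $1$ at every stage. Applying Theorem~\ref{size-reduction} once gives
\[
b^I_{[0,d-1]}=\sum_{(k_{1,2},\ldots,k_{1,d})\in[i_1,i_2-1]\times\cdots\times[i_{d-1},i_d-1]} b^{\{k_{1,2},\ldots,k_{1,d}\}}_{[0,d-2]}.
\]
The intervals are nonempty and consecutive, so $i_1\leq k_{1,2}\leq i_2-1<i_2\leq k_{1,3}$, and hence $k_{1,2}<k_{1,3}<\cdots<k_{1,d}$. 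Each index tuple is thus a genuine strictly increasing set, and the theorem applies to it again, with $J=[0,d-2]\leq\{k_{1,2},\ldots\}$ automatic because the $k$'s are distinct nonnegative integers.

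Next I apply Theorem~\ref{size-reduction} to each summand $b^{\{k_{1,2},\ldots,k_{1,d}\}}_{[0,d-2]}$. This produces the range $(k_{2,3},\ldots,k_{2,d})\in[k_{1,2},k_{1,3}-1]\times\cdots\times[k_{1,d-1},k_{1,d}-1]$ and columns $[0,d-3]$, and I continue inductively. The cleanest way to organise this is an induction on $d$ of the claim
\[
b^{\{i_1,\ldots,i_d\}}_{[0,d-1]}=\text{nested sum in the statement},
\]
where the inductive step is one application of Theorem~\ref{size-reduction} followed by the induction hypothesis applied to the $(d-1)$-element sets $\{k_{1,2},\ldots,k_{1,d}\}$. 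The recursion ends at $d=1$ with $b^{\{k\}}_{\{0\}}=1$, which matches the innermost $\sum 1$. At the last step, $b^{\{k_{d-2,d-1},k_{d-2,d}\}}_{[0,1]}$ becomes $\sum_{k_{d-1,d}\in[k_{d-2,d-1},k_{d-2,d}-1]} b^{\{k_{d-1,d}\}}_{\{0\}}=\sum 1$, consistent with the statement.

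I do not expect a real obstacle. The only points needing care are the index bookkeeping, namely checking that the ranges produced at each stage agree with those written in the statement (the typo $k_{1,m-1}$ should read $k_{1,d-1}$), and the verification above that each intermediate tuple is strictly increasing, so that Theorem~\ref{size-reduction} is legitimately applicable with $J\leq I$. Combining the resulting identity with Theorem~\ref{Jinterval} gives the corollary.
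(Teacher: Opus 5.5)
Your proposal is correct and is essentially the paper's own argument: iterate Theorem~\ref{size-reduction} with $J=[0,d-1]$ (so every $\pi$ factor equals $1$ and the column sets shrink to $[0,d-2],[0,d-3],\ldots$), stop at $b^{\{k\}}_{\{0\}}=1$, and identify $b^I_{[0,d-1]}$ with the Vandermonde quotient via Theorem~\ref{Jinterval}. Your induction on $d$, together with the check that each intermediate tuple is strictly increasing and satisfies $[0,d-2]\leq\{k_{1,2},\ldots,k_{1,d}\}$, spells out what the paper compresses into ``applying repeatedly''.
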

\begin{proof}
Applying repeatedly Theorem~\ref{size-reduction}, with $I=\{i_1,\ldots,i_d\}$ and $J=[0,d-1]$, and using that $b^{\{k_{d-1,d}\}}_{\{0\}}=1$,
one deduces that $b^I_J$ coincides with the left hand side part of the equality. By Theorem~\ref{Jinterval}, $b^I_J$ also coincides with the right hand side part of the equality. 
\end{proof}

\section{A closed formula for consecutive rows}

Let us introduce a useful notation. 

\begin{notation}\label{notation-Ik}
Let $(I,J)$ be the pair of subsets of $\mbn$ given by $I=[i,i+d-1]$ and $J=\{j_1,\ldots,j_d\}$. 
Suppose that $J\leq I$. Set $I^{(0)}=I$ and $J^{(0)}=J$. For $1\leq k\leq d-1$, let $(I^{(k)},J^{(k)})$ be the pair defined as follows:
\begin{eqnarray*}
I^{(k)}=[i+k-1,i+d-2]-j_k\phantom{+}\mbox{ and }\phantom{+}J^{(k)}=\{j_{k+1},\ldots,j_d\}-j_k-1.
\end{eqnarray*}
For instance, if $k=1$, then $I^{(1)}=[i-j_1,i-j_1+d-2]$ and $J^{(1)}=\{j_2-j_1-1,\ldots,j_d-j_1-1\}$; if $k=d-1$, then
$I^{(d-1)}=\{i+d-2-j_{d-1}\}$ and $J^{(d-1)}=\{j_d-j_{d-1}-1\}$.
\end{notation}

\begin{remark}\label{ikjk}
Let $(I,J)$ be the pair $I=[i,i+d-1]$ and $J=\{j_1,\ldots,j_d\}$. Suppose that $J\leq I$. 
The following hold. 
\begin{itemize}
\item[$(a)$] $I^{(k)}$ is an interval and $J^{(k)}\leq I^{(k)}$, for every 
$1\leq k\leq d-1$. 
\item[$(b)$] $I^{(k+1)}=(I^{(k)})^{(1)}$ and $J^{(k+1)}=(J^{(k)})^{(1)}$, 
for every $1\leq k\leq d-2$.
\item[$(c)$] If $J=[j,j+d-1]$ is an interval, then $J^{(k)}=[0,d-k-1]$, 
for every $1\leq k\leq d-1$.
\end{itemize}
\end{remark}
\begin{proof}
Note that $(a)$ and $(b)$ also hold for $k=0$, but not $(c)$. Suppose $1\leq k\leq d-1$.
By definition, $I^{(k)}$ is an interval. By hypothesis, $J\leq I$. So $j_d\leq i+d-1$ and
$j_d-j_k-1\leq i+d-2-j_k$. Since $I^{(k)}$ is an interval, it follows that $J^{(k)}\leq I^{(k)}$. This shows $(a)$. According to Notation~\ref{notation-Ik},
\begin{eqnarray*}
&&I^{(k)}=[i+k-1,i+d-2]-j_k=[i-j_k+k-1,i-j_k+d-2]\phantom{+}\mbox{ and }\phantom{+}\\
&&J^{(k)}=\{j_{k+1},\ldots,j_d\}-j_k-1=\{j_{k+1}-j_k-1,\ldots,j_d-j_k-1\}.
\end{eqnarray*}
Suppose that $1\leq k\leq d-2$. Then
\begin{eqnarray*}
(I^{(k)})^{(1)}&=&[i-j_k+k-1,i-j_k+d-3]-(j_{k+1}-j_k-1)=\\&&[i+k,i+d-2]-j_{k+1}=I^{(k+1)}
\mbox{ and}\\
(J^{(k)})^{(1)}&=&\{j_{k+2}-j_k-1,\ldots,j_d-j_k-1\}-(j_{k+1}-j_k-1)=\\&&
\{j_{k+2},\ldots,j_d\}-j_{k+1}-1=J^{(k+1)}.
\end{eqnarray*}
This shows $(b)$. Finally, if $J=[j,j+d-1]$ is an interval, then $j_k=j+k-1$, 
$j_{k+1}=j+k$, $j_d=j+d-1$ and 
$J^{(k)}=[j+k,j+d-1]-(j+k-1)-1=[0,d-k-1]$.
\end{proof}

As a consequence of Theorem~\ref{size-reduction}, when $I$ is an interval, we improve \cite[Lemma~9]{gv}.

\begin{lemma}\label{lemmaIinterval}
Let $I=[i,i+d-1]$ and $J=\{j_1,\ldots,j_d\}$. Suppose that $J\leq I$. 
\begin{itemize}
\item[$(a)$] Then $b^I_J=\pi^I_J\cdot b^{[i-j_1,i-j_1+d-2]}_{\{j_2-j_1-1,\ldots,j_d-j_1-1\}}=
\pi^I_J\cdot b^{I^{(1)}}_{J^{(1)}}$.
\item[$(b)$] If $j_1=0$, then $b^I_J=b^{[i,i+d-2]}_{\{j_2-1,\ldots,j_d-1\}}$.
\item[$(c)$] If $J=[0,d-1]$, then $b^I_{J}=1$ and $\pi^I_J=1$.
\end{itemize}
\end{lemma}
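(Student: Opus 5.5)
The plan is to apply Theorem~\ref{size-reduction} with $I=[i,i+d-1]$ and observe that the index box collapses to a single point. Since $i_s=i+s-1$, the $s$-th factor of the box is
\[
[i_{s-1}-j_1,\,i_s-j_1-1]=[i+s-2-j_1,\,i+s-2-j_1].
\]
This is the singleton $\{i+s-2-j_1\}$, for $2\leq s\leq d$. Hence the sum has exactly one term, with $(k_2,\ldots,k_d)=(i-j_1,\ldots,i-j_1+d-2)$. That term is $b^{[i-j_1,i-j_1+d-2]}_{\{j_2-j_1-1,\ldots,j_d-j_1-1\}}$.

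By Notation~\ref{notation-Ik} this is precisely $b^{I^{(1)}}_{J^{(1)}}$, which proves $(a)$. The hypothesis $J\leq I$ is exactly what Theorem~\ref{size-reduction} requires. The case $d=1$ should be checked separately or read with the empty-product convention: then the sum is empty-indexed, and one interprets the size-zero determinant as $1$, which gives $b^{\{i\}}_{\{j_1\}}=\pi^I_J$.

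For $(b)$, take $j_1=0$ in $(a)$. Notation~\ref{notation-binomial} gives $\pi^I_J=1$ when $j_1=0$, and $I^{(1)}=[i,i+d-2]$, $J^{(1)}=\{j_2-1,\ldots,j_d-1\}$. This yields the formula directly.

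For $(c)$, suppose $J=[0,d-1]$. Then $\pi^I_J=1$ since $j_1=0$. For the determinant, iterate $(b)$: each step removes one row and one column, and keeps $I$ an interval $[i,i+d-2]$ and $J$ an interval $[0,d-2]$. This is an induction on $d$, ending at $b^{\{i\}}_{\{0\}}=\binom{i}{0}=1$. The step needs $J\leq I$ at each stage, which holds because $I$ is an interval starting at $i\geq 0$ and $J=[0,d-1]$ (compare Remark~\ref{ikjk}).

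Alternatively, $(c)$ follows from Theorem~\ref{Jinterval}. With consecutive $i_k$ one has $\prod_{k<\ell}(i_\ell-i_k)=\prod_{k=0}^{d-1}k!$, and combined with $\pi^I_J=1$ this gives $b^I_J=1$. The only real obstacle is bookkeeping: verifying that each interval in Theorem~\ref{size-reduction} is a singleton, and handling the degenerate $d=1$ case in $(a)$.
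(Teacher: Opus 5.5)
Your proof is correct and is essentially the paper's argument: (a) applies Theorem~\ref{size-reduction} with $i_s=i+s-1$, where each factor $[i_{s-1}-j_1,i_s-j_1-1]$ collapses to $\{i+s-2-j_1\}$; (b) specializes to $j_1=0$, where $\pi^I_J=1$; and (c) is an induction on $d$ that iterates (b) down to $b_{i,0}=1$. Your side remarks also hold, though the paper does not need them: for $d=1$ the index set is the empty Cartesian product, which has exactly one element, so the sum is the single empty determinant $1$ rather than an empty sum; and the alternative proof of (c) via Theorem~\ref{Jinterval} works because $\prod_{k<\ell}(\ell-k)=\prod_{k=0}^{d-1}k!$.
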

\begin{proof}
Take in Theorem~\ref{size-reduction}, $i_k=i+k-1$. Then
\begin{eqnarray*}
[i_1-j_1,i_2-j_1-1]\times\ldots\times[i_{d-1}-j_1,i_d-j_1-1]=\{i-j_1\}\times\ldots\times\{i-j_1+d-2\},
\end{eqnarray*}
which is equal to the interval $[i-j_1,i-j_1+d-2]$. This shows $(a)$. 
If $j_1=0$, then $\pi^I_J=1$ (see Notation~\ref{notation-binomial}) and $(b)$ follows from $(a)$. 
Suppose now that $J=[0,d-1]$, so $\{j_2-1,\ldots,j_d-1\}=[0,d-2]$.
Let us prove, by induction on $d\geq 1$, 
that $b^I_J=1$. For $d=1$, $b^I_J=\det(b_{i,0})=1$.
Supppose that $d>1$. By item $(b)$,
$b^I_J=b^{[i,i+d-2]}_{[0,d-2]}$, which, by the induction hypothesis, is equal to $1$. 
Moreover, if $J=[0,d-1]$, then $j_1=0$ and $\pi^I_J=1$ as well. 
\end{proof}

Next we give a closed formula for a binomial determinant with consecutive rows in terms of the products of $\pi$'s. 
This is an alternative expression to the one given in \cite[Proposition~10]{gv}. 

\begin{theorem}[\sc Consecutive rows]\label{Iinterval}
Let $I=[i,i+d-1]$ and $J=\{j_1,\ldots,j_d\}$. Suppose that $J\leq I$. Then, for every
$1\leq k\leq d-1$, 
\begin{eqnarray*}
b^I_J=\prod_{\ell=0}^{k-1}\pi^{I^{(\ell)}}_{J^{(\ell)}}b^{I^{(k)}}_{J^{(k)}}=
\pi^I_Jb^{I^{(1)}}_{J^{(1)}}=
\ldots=
\pi^I_J\pi^{I^{(1)}}_{J^{(1)}}\cdots \pi^{I^{(d-2)}}_{J^{(d-2)}}b^{I^{(d-1)}}_{J^{(d-1)}}=
\prod_{\ell=0}^{d-1}\pi^{I^{(\ell)}}_{J^{(\ell)}}.
\end{eqnarray*}
\end{theorem}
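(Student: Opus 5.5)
The plan is to prove, by induction on $k$ for $1\leq k\leq d-1$, the identity
\[
b^I_J=\prod_{\ell=0}^{k-1}\pi^{I^{(\ell)}}_{J^{(\ell)}}\, b^{I^{(k)}}_{J^{(k)}},
\]
and then evaluate the last factor $b^{I^{(d-1)}}_{J^{(d-1)}}$ as a single $\pi$.

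\emph{Base case $k=1$.} Since $I^{(0)}=I$ and $J^{(0)}=J$, the identity reads $b^I_J=\pi^I_J\, b^{I^{(1)}}_{J^{(1)}}$. This is exactly Lemma~\ref{lemmaIinterval}$(a)$, which applies because $I$ is an interval and $J\leq I$.

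\emph{Inductive step.} Assume the identity for some $k$ with $1\leq k\leq d-2$. By Remark~\ref{ikjk}$(a)$, $I^{(k)}$ is an interval and $J^{(k)}\leq I^{(k)}$. Both sets have $d-k\geq 2$ elements, so Lemma~\ref{lemmaIinterval}$(a)$ applies to the pair $(I^{(k)},J^{(k)})$ and gives
\[
b^{I^{(k)}}_{J^{(k)}}=\pi^{I^{(k)}}_{J^{(k)}}\, b^{(I^{(k)})^{(1)}}_{(J^{(k)})^{(1)}}.
\]
By Remark~\ref{ikjk}$(b)$, $(I^{(k)})^{(1)}=I^{(k+1)}$ and $(J^{(k)})^{(1)}=J^{(k+1)}$. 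Substituting into the inductive hypothesis yields the identity for $k+1$. This establishes all the intermediate equalities, including $b^I_J=\pi^I_J b^{I^{(1)}}_{J^{(1)}}$.

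\emph{Final equality.} Take $k=d-1$. The pair $I^{(d-1)}=\{i+d-2-j_{d-1}\}$ and $J^{(d-1)}=\{j_d-j_{d-1}-1\}$ consists of singletons, so
\[
b^{I^{(d-1)}}_{J^{(d-1)}}=b_{p,q}, \qquad p=i+d-2-j_{d-1},\quad q=j_d-j_{d-1}-1.
\]
For $d=1$ the identity is trivial. In general, the definition of $\pi$ for singletons gives $\pi^{\{p\}}_{\{q\}}=b_{p,q}/b_{q,q}=b_{p,q}$. Hence $b^{I^{(d-1)}}_{J^{(d-1)}}=\pi^{I^{(d-1)}}_{J^{(d-1)}}$, and the product extends to $\ell=d-1$.

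The only delicate points are bookkeeping. First, the hypothesis $J^{(k)}\leq I^{(k)}$ is needed at each step, and it is supplied by Remark~\ref{ikjk}$(a)$. Second, one must verify that Lemma~\ref{lemmaIinterval} is used only on sets of size at least $2$, which holds because $k\leq d-2$ in the inductive step. Neither point is a real obstacle.
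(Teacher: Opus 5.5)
Your proof is correct and follows the paper's argument essentially verbatim: an induction on $k$ that uses Lemma~\ref{lemmaIinterval}$(a)$ together with parts $(a)$ and $(b)$ of Remark~\ref{ikjk}, followed by identifying the $1\times 1$ determinant $b^{I^{(d-1)}}_{J^{(d-1)}}$ with $\pi^{I^{(d-1)}}_{J^{(d-1)}}$. You also check explicitly that the lemma is only applied to sets with at least two elements, a bookkeeping point the paper leaves implicit.
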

\begin{proof} 
Let us prove $b^I_J=\prod_{\ell=0}^{k-1}\pi^{I^{(\ell)}}_{J^{(\ell)}}b^{I^{(k)}}_{J^{(k)}}$
by induction on $1\leq k\leq d-1$. Suppose that $k=1$. By Lemma~\ref{lemmaIinterval}, 
$b^I_J=\pi^I_Jb^{I^{(1)}}_{J^{(1)}}=\pi^{I^{(0)}}_{J^{(0)}}b^{I^{(1)}}_{J^{(1)}}$. 
Suppose that $2\leq k\leq d-1$. By the induction hypothesis, 
$b^I_J=\prod_{\ell=0}^{k-2}\pi^{I^{(\ell)}}_{J^{(\ell)}}b^{I^{(k-1)}}_{J^{(k-1)}}$.
By Remark~\ref{ikjk},~$(a)$, $I^{(k-1)}$ is an interval and $J^{(k-1)}\leq I^{(k-1)}$. Thus,
by Lemma~\ref{lemmaIinterval} again, 
$b^{I^{(k-1)}}_{J^{(k-1)}}=\pi^{I^{(k-1)}}_{J^{(k-1)}}
b^{(I^{(k-1)})^{(1)}}_{(J^{(k-1)})^{(1)}}$. By Remark~\ref{ikjk},~$(b)$,
$(I^{(k-1)})^{(1)}=I^{(k)}$ and $(J^{(k-1)})^{(1)}=J^{(k)}$. Therefore, 
$b^{I^{(k-1)}}_{J^{(k-1)}}=\pi^{I^{(k-1)}}_{J^{(k-1)}}b^{I^{(k)})}_{J^{(k)}}$ and 
$b^I_J=\prod_{\ell=0}^{k-2}\pi^{I^{(\ell)}}_{J^{(\ell)}}b^{I^{(k-1)}}_{J^{(k-1)}}=
\prod_{\ell=0}^{k-1}\pi^{I^{(\ell)}}_{J^{(\ell)}}b^{I^{(k)}}_{J^{(k)}}$.

Finally, note that $I^{(d-1)}=\{i+d-2-j_{d-1}\}$ 
and $J^{(d-1)}=\{j_d-j_{d-1}-1\}$, so $b^{I^{(d-1)}}_{J^{(d-1)}}=b^{\{i+d-2-j_{d-1}\}}_{\{j_d-j_{d-1}-1\}}=\pi^{I^{(d-1)}}_{J^{(d-1)}}$.
\end{proof}

As a corollary we get the value of the binomial determinant of two intervals $I$ and $J$, a result stated
by Moh, without a demonstration, in \cite[Theorem~2.2]{moh1}.

\begin{corollary}\label{corollary-moh}
Let $I=[i,i+d-1]$ and $J=[j,j+d-1]$. Then $b^I_J=\pi^I_J$. 
In particular, if $j=0$, then $b^{[i,i+d-1]}_{[0,d-1]}=1$, and if $j=1$, then
$b^{[i,i+d-1]}_{[1,d]}=b_{i+d-1,d}$.
\end{corollary}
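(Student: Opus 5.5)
The plan is to combine Theorem~\ref{Iinterval} with Remark~\ref{ikjk},~$(c)$, and then show that every factor except the first equals $1$. Since $J=[j,j+d-1]\leq I=[i,i+d-1]$ amounts to $j\leq i$, I first dispose of the degenerate case $j>i$. In that case $J\not\leq I$, so $b^I_J=0$ by Lemma~\ref{reductionJ<I},~$(a)$. Also $\pi^I_J=0$, because $i_1=i<j=j_1$ (see Notation~\ref{notation-binomial}). Hence the equality $b^I_J=\pi^I_J$ holds trivially.

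Now assume $j\leq i$, so that $J\leq I$. Theorem~\ref{Iinterval} gives
\[
b^I_J=\prod_{\ell=0}^{d-1}\pi^{I^{(\ell)}}_{J^{(\ell)}}.
\]
By Remark~\ref{ikjk},~$(c)$, $J^{(\ell)}=[0,d-\ell-1]$ for each $1\leq\ell\leq d-1$. Its first element is $0$, so $\pi^{I^{(\ell)}}_{J^{(\ell)}}=1$ by the observation in Notation~\ref{notation-binomial}. Every factor with $\ell\geq 1$ is therefore $1$, which leaves $b^I_J=\pi^{I^{(0)}}_{J^{(0)}}=\pi^I_J$. For $d=1$ the product has only the $\ell=0$ factor, and the claim is immediate since $b_{i,j}=\pi^{\{i\}}_{\{j\}}$ because $b_{j,j}=1$. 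Alternatively, one can apply Lemma~\ref{lemmaIinterval},~$(a)$ once and then use $(c)$ of the same lemma on $b^{I^{(1)}}_{[0,d-2]}=1$; this route is perhaps cleaner.

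For the particular cases, when $j=0$ we have $\pi^I_J=1$ directly, so $b^I_J=1$, which matches Lemma~\ref{lemmaIinterval},~$(c)$. When $j=1$,
\[
\pi^I_J=\frac{\prod_{r=0}^{d-1}b_{i+r,1}}{\prod_{r=0}^{d-1}b_{1+r,1}}=\frac{i(i+1)\cdots(i+d-1)}{d!}=\binom{i+d-1}{d}=b_{i+d-1,d}.
\]
If $i=0$, then $J\not\leq I$ and both sides vanish, so this case is consistent as well. The only mildly delicate point is bookkeeping: checking that the $\ell\geq 1$ terms really have first column index $0$ and that the case $J\not\leq I$ is covered. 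Otherwise the corollary follows directly from Theorem~\ref{Iinterval}.
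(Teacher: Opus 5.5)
Your proof is correct and follows the paper's argument. You dispose of $j>i$ via Lemma~\ref{reductionJ<I},~$(a)$ and $\pi^I_J=0$; you then combine Theorem~\ref{Iinterval} with Remark~\ref{ikjk},~$(c)$ so that every factor $\pi^{I^{(\ell)}}_{J^{(\ell)}}$ with $\ell\geq 1$ equals $1$; and you evaluate $\pi^I_J$ directly for $j=0,1$. Your explicit treatment of $d=1$ and the one-step alternative through Lemma~\ref{lemmaIinterval},~$(a)$ and $(c)$ are valid small refinements rather than a different approach.
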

\begin{proof}
Suppose that $J\not\leq I$, in other words, $i<j$. Then $\pi^I_J=0$ (see Notation~\ref{notation-binomial})
and, by Lemma~\ref{reductionJ<I}, $b^I_J=0$. Suppose that $J\leq I$. By Theorem~\ref{Iinterval}, 
$b^I_J=\pi^I_J\pi^{I^{(1)}}_{J^{(1)}}\cdots \pi^{I^{(d-1)}}_{J^{(d-1)}}$. By Remark~\ref{ikjk}, for all $1\leq k\leq d-1$, 
$J^{(k)}=[0,d-k-1]$. Thus, $\pi^{I^{(1)}}_{J^{(1)}}=\ldots=\pi^{I^{(d-1)}}_{J^{(d-1)}}=1$ and  $b^I_J=\pi^I_J$. 
In particular, if $j=0$, by Notation~\ref{notation-binomial} (see also Lemma~\ref{lemmaIinterval}), then $\pi^{[i,i+d-1]}_{[0,d-1]}=1$, so 
$b^{[i,i+d-1]}_{[0,d-1]}=\pi^{[i,i+d-1]}_{[0,d-1]}=1$.
If $j=1$, then $b^{[i,i+d-1]}_{[1,d]}=
\pi^{[i,i+d-1]}_{[1,d]}=\frac{b_{i,1}\cdots b_{(i+d-1),1}}{b_{1,1}\cdots b_{d,1}}=
\frac{(i+d-1)\cdots i}{d!}=\frac{(i+d-1)!}{d!\cdot (i-1)!}=b_{i+d-1,d}$.
\end{proof}

\begin{corollary}\label{cor-int-j1}
Let $I=[i,i+d-1]$ and $J=[j,j+d-1]$. Then $b^I_J=b^{I\setminus I\cap J}_{J\setminus I\cap J}$.
\end{corollary}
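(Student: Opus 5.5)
The plan is to reduce everything to Corollary~\ref{corollary-moh}, which says that the binomial determinant of two intervals of the same length equals $\pi$. I first dispose of the trivial cases. If $I\cap J=\emptyset$, then $I\setminus I\cap J=I$ and $J\setminus I\cap J=J$, so there is nothing to prove. If $I=J$, both sides equal $1$, with the convention that the empty determinant is $1$.

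Next suppose $j>i$ and $I\cap J\neq\emptyset$, so $j\leq i+d-1$. Then $I\cap J=[j,i+d-1]$. Hence $I\setminus I\cap J=[i,j-1]$ and $J\setminus I\cap J=[i+d,j+d-1]$, two intervals of the same length $j-i$. Since $i+d>i$, the column set is not $\leq$ the row set. By Lemma~\ref{reductionJ<I}$(a)$ the right-hand side is $0$. The left-hand side is also $0$ by the same lemma, since $J\not\leq I$.

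The main case is $j<i\leq j+d-1$. Put $m=i-j$, so $1\leq m\leq d-1$. Then $I\cap J=[i,j+d-1]$, and
\[
I':=I\setminus I\cap J=[j+d,i+d-1]=[j+d,j+d+m-1],\qquad J':=J\setminus I\cap J=[j,i-1]=[j,j+m-1].
\]
These are intervals of the same length $m$, and $J'\leq I'$.

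Applying Corollary~\ref{corollary-moh} twice gives $b^I_J=\pi^I_J$ and $b^{I'}_{J'}=\pi^{I'}_{J'}$. Unwinding the definition of $\pi$ yields
\[
\pi^I_J=\frac{\prod_{s=i}^{i+d-1}b_{s,j}}{\prod_{s=j}^{j+d-1}b_{s,j}},\qquad
\pi^{I'}_{J'}=\frac{\prod_{s=j+d}^{i+d-1}b_{s,j}}{\prod_{s=j}^{i-1}b_{s,j}}.
\]
The key observation is that both are products of the factors $b_{s,j}$ over the same type of index ranges, with the same lower index $j$ throughout.

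In the expression for $\pi^I_J$, the factors with $s\in[i,j+d-1]=I\cap J$ appear in both numerator and denominator. They are all nonzero because $s\geq j$, so they cancel. What remains is the numerator over $[j+d,i+d-1]$ and the denominator over $[j,i-1]$, which is exactly $\pi^{I'}_{J'}$.

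The only step needing care is this cancellation bookkeeping, namely checking that the ranges split as $[i,i+d-1]=[i,j+d-1]\sqcup[j+d,i+d-1]$ and $[j,j+d-1]=[j,i-1]\sqcup[i,j+d-1]$. Beyond that, the argument is immediate from Corollary~\ref{corollary-moh}.
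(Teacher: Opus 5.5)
Your proof is correct and follows the paper's route: apply Corollary~\ref{corollary-moh} to write both sides as $\pi$'s, then verify $\pi^I_J=\pi^{I\setminus I\cap J}_{J\setminus I\cap J}$. Your case analysis and the explicit cancellation of the nonzero factors $b_{s,j}$ for $s\in I\cap J$ spell out the step the paper only calls ``easy to check''.
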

\begin{proof}
By Corollary~\ref{corollary-moh}, $b^I_J=\pi^I_J$.
It is easy to check that $\pi^I_J=\pi^{I\setminus I\cap J}_{J\setminus I\cap J}$. 
Since $I\setminus I\cap J$ and $J\setminus I\cap J$ are two intervals, 
again by Corollary~\ref{corollary-moh}, it follows that
$\pi^{I\setminus I\cap J}_{J\setminus I\cap J}=b^{I\setminus I\cap J}_{J\setminus I\cap J}$.
\end{proof}

\begin{remark}
The formula $b^I_J=b^{I\setminus I\cap J}_{J\setminus I\cap J}$ is not true if $J$ or $I$ 
are not intervals. For instance, $b^{[2,3]}_{\{0,2\}}=2$, whereas $b^{\{3\}}_{\{0\}}=1$. Similarly, 
$b^{\{1,3\}}_{[0,1]}=2$, whereas $b^{\{3\}}_{\{0\}}=1$. 
\end{remark}

\begin{remark}\label{pinatural}
As a consequence of Corollary~\ref{corollary-moh}, $\pi^I_J$ is natural if $I$ and $J$ are both intervals. However, the converse is not true, as $\pi^I_J=1$ when $J=I$. 
Observe that, 
if $I$ or $J$ are not intervals, then $\pi^I_J$ might be not natural, for instance, $\pi^{\{1,3\}}_{[1,2]}=3/2$ and 
$\pi^{[4,5]}_{\{1,3\}}=20/3$. 
\end{remark}

\section{Consecutive rows and almost consecutive columns}\label{sectionAlmostJ}

We consider now consecutive rows and almost consecutive columns, that is, $I$ is an interval and $J$ is an interval missing one element. 
We start with the following particular case which loosely speaking says 
$b^{[\alpha,\beta]}_{[0,\gamma]\setminus\{\delta\}}=b_{\beta-\delta,\gamma-\delta}$.

\begin{lemma}\label{lemmaJpunct}
Let $d\geq 2$, $i\geq 1$, $I=[i,i+d-2]$ and $J=[0,d-1]$. If $1\leq r\leq d$, then 
\begin{eqnarray*}
b^{[i,i+d-2]}_{[0,d-1]\setminus\{r-1\}}=b_{i+d-2-(r-1),(d-1)-(r-1)}=b_{i+d-r-1,d-r}.
\end{eqnarray*}
\end{lemma}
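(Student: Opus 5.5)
Write $J_r=[0,d-1]\setminus\{r-1\}$; this is a set of $d-1$ columns against the $d-1$ consecutive rows $I=[i,i+d-2]$. We argue by induction on $r$, peeling off initial columns with Lemma~\ref{lemmaIinterval}~$(b)$, and then finish with Corollary~\ref{corollary-moh}.

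\emph{Base case $r=1$.} Here $J_1=[1,d-1]$ is an interval of length $d-1$. Since $i\geq 1$, we have $J_1\leq I$. Corollary~\ref{corollary-moh} with $j=1$ and size $d-1$ gives
\[
b^{[i,i+d-2]}_{[1,d-1]}=b_{i+d-2,d-1}.
\]
This is exactly $b_{i+d-r-1,d-r}$ for $r=1$.

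\emph{Case $r\geq 2$.} Now $0\in J_r$, so $j_1=0$, and $J_r\leq I$ holds because $I$ is an interval and $j_{d-1}=d-1\leq i+d-2$ (using $i\geq 1$). Lemma~\ref{lemmaIinterval}~$(b)$ then removes the first row and the first column and shifts the remaining columns down by one:
\[
b^{[i,i+d-2]}_{[0,d-1]\setminus\{r-1\}}=b^{[i,i+d-3]}_{[0,d-2]\setminus\{r-2\}}.
\]
This is the same type of determinant with $d$ replaced by $d-1$ and $r$ by $r-1$.

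\emph{Iterating.} We repeat this step $r-1$ times; each application needs $d\geq 2$ at the current size, which holds since the size drops from $d$ to $d-r+1\geq 1$. After $r-1$ steps we reach
\[
b^{[i,i+d-r-1]}_{[0,d-r]\setminus\{0\}}=b^{[i,i+d-r-1]}_{[1,d-r]}.
\]
This is a determinant of size $d-r$.
\begin{itemize}
\item If $d-r\geq 1$, Corollary~\ref{corollary-moh} with $j=1$ gives $b_{i+d-r-1,d-r}$, as claimed.
\item If $r=d$, the last step is degenerate: at size $2$ with $r=2$ we have $b^{\{i'\}}_{\{0\}}=1$ for the current row $i'=i$. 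This agrees with $b_{i-1,0}=1$.
\end{itemize}

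A cleaner way to organize this is as an induction on $r$ with $d$ varying, carrying the hypotheses $i\geq1$ and $1\leq r\leq d$ along. The main care point is the bookkeeping in Lemma~\ref{lemmaIinterval}~$(b)$: we must check that the shifted column set is again of the form $[0,d-2]\setminus\{r-2\}$ and that the hypothesis $J\leq I$ persists. Both follow because the removed element is $0$ and the rows remain an interval starting at $i\geq1$. The edge case $r=d$, where the final determinant is empty or of size one, must be handled separately as above.
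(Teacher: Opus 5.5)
Your proof is correct and is essentially the paper's argument: both remove the column $0$ with Lemma~\ref{lemmaIinterval} (the paper uses part $(a)$ with $\pi=1$, which is part $(b)$) to pass from $(d,r)$ to $(d-1,r-1)$, and both finish with Corollary~\ref{corollary-moh} for $j=1$. The only difference is that for $r=d$ the paper applies Corollary~\ref{corollary-moh} with $j=0$ directly rather than descending to $b^{\{i\}}_{\{0\}}=1$; also, when $r=d$ the largest column is $d-2$, not $d-1$, though $J_r\leq I$ still holds.
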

\begin{proof}
Since $i\geq 1$ and $1\leq r\leq d-1$, then $J\setminus\{r-1\}=[0,d-1]\setminus\{r-1\}\leq [i,i+d-2]=I$. 
Suppose that $r=1$. By Corollary~\ref{corollary-moh}, 
\begin{eqnarray*}
b^{[i,i+d-2]}_{[0,d-1]\setminus\{r-1\}}=b^{[i,i+d-2]}_{[1,d-1]}=b_{i+d-2,d-1}=b_{i+d-r-1,d-r}.
\end{eqnarray*}
Suppose that $r=2$. By Lemma~\ref{lemmaIinterval}, $(b)$, and Corollary~\ref{corollary-moh}, 
\begin{eqnarray*}
b^{[i,i+d-2]}_{[0,d-1]\setminus\{r-1\}}=b^{[i,i+d-2]}_{\{0\}\sqcup [2,d-1]}=
b^{[i,i+d-3]}_{[1,d-2]}=b_{i+d-3,d-2}=b_{i+d-r-1,d-r}.
\end{eqnarray*}
Suppose that $r=d$. By Corollary~\ref{corollary-moh},
\begin{eqnarray*}
b^{[i,i+d-2]}_{[0,d-1]\setminus\{d-1\}}=b^{[i,i+d-2]}_{[0,d-2]}=1=b_{i+d-r-1,d-r}.
\end{eqnarray*}
Thus, the result is true for $r=1$, $r=2$ and $r=d$.
If $d=2$, since $1\leq r\leq d$, then $r=1$ or $r=d$ and the result holds true too.
Therefore, we can suppose that $d\geq 3$ and that $3\leq r\leq d-1$. 
In such a case, 
$J\setminus\{r-1\}=[0,d-1]\setminus\{r-1\}=[0,r-2]\sqcup [r,d-1]$. 
By Lemma~\ref{lemmaIinterval}, $(a)$, 
$b^{[i,i+d-2]}_{[0,r-2]\sqcup [r,d-1]}
=\pi^{[i,i+d-2]}_{[0,r-2]\sqcup [r,d-1]}\cdot b^{[i,i+d-3]}_{[0,r-3]\sqcup [r-1,d-2]}$.
By Notation~\ref{notation-binomial},
$\pi^{[i,i+d-2]}_{[0,r-2]\sqcup [r,d-1]}=1$. By the induction hypothesis,
$b^{[i,i+d-3]}_{[0,r-3]\sqcup [r-1,d-2]}=
b^{[i,i+d-3]}_{[0,d-2]\setminus\{r-2\}}=
b_{i+d-3-(r-2),d-2-(r-2)}=b_{i+d-r-1,d-r}$. 
\end{proof}

More in general, we obtain: 

\begin{theorem}[\sc Consecutive rows, almost consecutive columns]\label{Almostcolums}
Let $d\geq 2$, let $I=[i,i+d-2]$ and $J=[j,j+d-1]$, with $j\leq i-1$. 
If $2\leq r\leq d-1$, then 
\begin{eqnarray*}
b^I_{J\setminus\{j+r-1\}}=\pi^I_{J\setminus\{j+r-1\}}b_{i+d-j-r-1,d-r}.
\end{eqnarray*}
If $r=1$ or $r=d$, then 
\begin{eqnarray*}
b^I_{J\setminus\{j\}}=\pi^{I}_{J\setminus\{j\}}
\phantom{+}\mbox{ and }\phantom{+}
b^I_{J\setminus\{j+d-1\}}=\pi^{I}_{J\setminus\{j+d-1\}}.
\end{eqnarray*}
\end{theorem}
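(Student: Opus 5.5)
The plan is to reduce to Lemma~\ref{lemmaJpunct} via Lemma~\ref{reductionJ<I}~$(c)$. Write $J'=J\setminus\{j+r-1\}$, a $(d-1)$-subset of $[j,j+d-1]$. First I check that $J'\leq I$. Since $I=[i,i+d-2]$ is an interval and $j\leq i-1$, the $k$-th element of $J'$ is at most $j+k\leq i+k-1$, which is the $k$-th element of $I$. So Lemma~\ref{reductionJ<I}~$(c)$ applies with shift $p=\min J'$, giving $b^I_{J'}=\pi^I_{J'}\, b^{I-p}_{J'-p}$. The argument then splits according to whether $r=1$ or $r\geq 2$, since this determines $\min J'$.

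\emph{Case $2\leq r\leq d$.} Here $\min J'=j$, so $J'-j=[0,d-1]\setminus\{r-1\}$ and $I-j=[i-j,i-j+d-2]$ with $i-j\geq 1$. Lemma~\ref{lemmaJpunct}, applied with $i-j$ in place of $i$, gives
\[
b^{[i-j,i-j+d-2]}_{[0,d-1]\setminus\{r-1\}}=b_{i-j+d-r-1,d-r}.
\]
Hence $b^I_{J'}=\pi^I_{J'}\, b_{i+d-j-r-1,d-r}$, which is the stated formula for $2\leq r\leq d-1$. For $r=d$ the same formula yields $b_{i-j-1,0}=1$, so $b^I_{J\setminus\{j+d-1\}}=\pi^I_{J\setminus\{j+d-1\}}$. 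Alternatively, $J\setminus\{j+d-1\}=[j,j+d-2]$ is an interval, and Corollary~\ref{corollary-moh} gives the same conclusion directly.

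\emph{Case $r=1$.} Here $J'=[j+1,j+d-1]$ is itself an interval of length $d-1$, and $I$ is an interval of the same length, so Corollary~\ref{corollary-moh} gives $b^I_{J'}=\pi^I_{J'}$ immediately. Note that the formula of the other case would not apply here, because $\min J'=j+1$ rather than $j$.

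The only delicate points are the bookkeeping, namely the index shift and the verification that the hypothesis $i-j\geq 1$ of Lemma~\ref{lemmaJpunct} holds, which follows from $j\leq i-1$. I should also check that Lemma~\ref{lemmaJpunct} really covers the whole range $1\leq r\leq d$ as stated: its proof is by induction on $d$ and handles the cases $r=1,2,d$ explicitly. No genuine obstacle is expected; the main care is making sure that the $r=1$ case uses the correct shift $j+1$.
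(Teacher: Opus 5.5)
Your proposal is correct and follows essentially the same route as the paper: for $2\leq r\leq d-1$ you check $J\setminus\{j+r-1\}\leq I$, shift by $j$ using Lemma~\ref{reductionJ<I}~$(c)$, and apply Lemma~\ref{lemmaJpunct} with $i-j\geq 1$ in place of $i$. For $r=1$ you use Corollary~\ref{corollary-moh}, since $[j+1,j+d-1]$ is an interval, exactly as the paper does; for $r=d$ the paper uses Corollary~\ref{corollary-moh} directly, and your alternative of specializing Lemma~\ref{lemmaJpunct} to obtain $b_{i-j-1,0}=1$ is equally valid.
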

\begin{proof}
Suppose that $2\leq r\leq d-1$. Since $j\leq i-1$, it follows that $j+d-1\leq i+d-2$ and
\begin{eqnarray*}
J\setminus \{j+r-1\}=[j,j+r-2]\sqcup [j+r,j+d-1]\leq [i,i+d-2]=I.
\end{eqnarray*}
By Lemma~\ref{reductionJ<I}, $(c)$, 
$b^{I}_{J\setminus\{j+r-1\}}=\pi^{I}_{J\setminus\{j+r-1\}}b^{I-j}_{J\setminus\{j+r-1\}-j}$, where 
\begin{eqnarray*}
I-j=[i-j,i-j+d-2]\phantom{+}\mbox{ and }\phantom{+}J\setminus\{j+r-1\}-j=[0,d-1]\setminus\{r-1\}.
\end{eqnarray*}
Setting $\tilde{i}=i-j$, and by Lemma~\ref{lemmaJpunct},
$b^{I-j}_{J\setminus\{j+r-1\}-j}=b^{[\tilde{i},\tilde{i}+d-2]}_{[0,d-1]\setminus\{r-1\}}=
b_{\;\tilde{i}+d-r-1,d-r}=b_{i+d-j-r-1,d-r}$.

If $r=1$, then $J\setminus\{j+r-1\}=[j+1,j+d-1]$ and, by Corollary~\ref{corollary-moh}, 
$b^I_{J\setminus\{j\}}=\pi^{I}_{J\setminus\{j\}}$.

If $r=d$, then $J\setminus\{j+r-1\}=[j,j+d-2]$ and, by Corollary~\ref{corollary-moh}, 
$b^I_{J\setminus\{j+d-1\}}=\pi^{I}_{J\setminus\{j+d-1\}}$.
\end{proof}

\section{Almost consecutive rows and consecutive columns}\label{sectionAlmostI}

Now we study the case of almost consecutive rows and consecutive columns, i.e., 
$I$ is an interval missing one element and $J$ is an interval. Note that
\begin{eqnarray*}
[i,i+d-1]\setminus\{i+d-1\}<[i,i+d-1]\setminus\{i+d-2\}<\ldots<[i,i+d-1]\setminus\{i\}.
\end{eqnarray*}
In particular, if $J\leq [i,i+d-1]\setminus\{i+d-1\}$, then $J\leq [i,i+d-1]\setminus\{i+r-1\}$, 
for all $1\leq r\leq d$.

\begin{lemma}\label{lemma-binomialsum}
Let $d\geq 3$, $I=[i,i+d-1]$ and $J=\{0,j_2,\ldots,j_{d-1}\}$, 
with $J\leq I\setminus\{i+d-1\}$. If $2\leq r\leq d-1$, then
\begin{eqnarray*}
b^{I\setminus\{i+r-1\}}_J=
b^{[i,i+d-2]\setminus\{i+r-1\}}_{\{j_2,\ldots,j_{d-1}\}-1}+
b^{[i,i+d-2]\setminus\{i+r-2\}}_{\{j_2,\ldots,j_{d-1}\}-1}.
\end{eqnarray*}
If $r=1$ or $r=d$, then
\begin{eqnarray*}
b^{I\setminus\{i\}}_J=b^{[i+1,i+d-2]}_{\{j_2,\ldots,j_{d-1}\}-1}
\phantom{+}\mbox{ and }\phantom{+}
b^{I\setminus\{i+d-1\}}_J=b^{[i,i+d-3]}_{\{j_2,\ldots,j_{d-1}\}-1}.
\end{eqnarray*}
\end{lemma}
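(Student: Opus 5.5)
The plan is to apply Theorem~\ref{size-reduction} directly to the pair $(I\setminus\{i+r-1\},J)$, with $j_1=0$ so that $\pi=1$. First I check that $J\leq I\setminus\{i+r-1\}$. This follows from the hypothesis $J\leq I\setminus\{i+d-1\}$ together with the chain of inequalities noted just before the lemma. The rows are $i_1<\ldots<i_{d-1}$, namely the elements of $[i,i+d-1]$ with $i+r-1$ removed. For consecutive rows $i_s,i_{s+1}$ the interval $[i_s,i_{s+1}-1]$ is a singleton $\{i_s\}$, except at the gap, where $i_{s+1}-i_s=2$ and the interval has two elements. The columns after reduction are $\{j_2,\ldots,j_{d-1}\}-1$.

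For $2\leq r\leq d-1$ the gap occurs between $i_{r-1}=i+r-2$ and $i_r=i+r$. The index $k_r$ therefore ranges over $\{i+r-2,i+r-1\}$, while every other $k_s$ is forced. For $s<r$ we get $k_s=i_{s-1}=i+s-2$. For $s>r$ we get $k_s=i_{s-1}$, and since $i_{s-1}$ lies above the gap when $s-1\geq r$, this gives $k_s=i+s-1$.

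The $(d-2)$-element row sets are then as follows.
\begin{itemize}
\item When $k_r=i+r-2$, the rows are $\{i,\ldots,i+r-2\}\cup\{i+r-2\}\cup\ldots$. This contains a repeat, so I need to recheck the indexing.
\end{itemize}
Recomputing carefully, $k_2$ ranges over $[i_1,i_2-1]$, and so on. So $(k_2,\ldots,k_{d-1})$ takes the values $k_s\in[i_{s-1},i_s-1]$, which gives a $(d-2)$-tuple. The forced values are $k_s=i_{s-1}$ for every $s$ except $s=r$, where $k_r\in\{i+r-2,i+r-1\}$. The resulting row set is $\{i_1,\ldots,i_{r-1},k_r,i_r,\ldots,i_{d-2}\}$.

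\begin{itemize}
\item With $k_r=i+r-2$ the set is $\{i,\ldots,i+r-2,\,i+r-2,\ldots\}$. Since $i_{r-1}=i+r-2$ is already included, I realign: the tuple is $k_s=i_{s-1}$ for $s\leq r$, that is $\{i,\ldots,i+r-2\}$, and $k_s=i_{s-1}$ for $s>r$ lies above the gap.
\item The two choices at the gap therefore give the row sets $[i,i+d-2]\setminus\{i+r-1\}$ (taking the lower value, shifting nothing) and $[i,i+d-2]\setminus\{i+r-2\}$ (taking the upper value).
\end{itemize}

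The main obstacle I expect is exactly this bookkeeping: identifying which $k_s$ is free, at $s=r$ in terms of the interval $[i_{r-1},i_r-1]=[i+r-2,i+r-1]$, and checking that the two resulting $(d-2)$-subsets of $[i,i+d-2]$ are the stated ones. I would verify this on a small case, say $d=4$ and $r=2$, before writing it in general.

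The edge cases are handled the same way, with every $k_s$ forced. For $r=1$ the rows are $[i+1,i+d-1]$ and each $k_s=i_{s-1}$, giving $[i+1,i+d-2]$. For $r=d$ the rows are $[i,i+d-2]$ and each $k_s=i_{s-1}$, giving $[i,i+d-3]$. In both cases the sum collapses to a single determinant, which is the claimed formula.
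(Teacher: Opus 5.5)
Your proposal is correct and is essentially the paper's proof. You apply Theorem~\ref{size-reduction} with $j_1=0$, so that $\pi=1$. The only non-singleton factor is $[i_{r-1},i_r-1]=[i+r-2,i+r-1]$, from which you read off the two row sets. For $r=1,d$ the sum has a single term, which the paper phrases as Lemma~\ref{lemmaIinterval}$(b)$.

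The confusion you flagged in the bookkeeping goes away once you see that the forced entries below the gap are $k_2,\ldots,k_{r-1}=i,\ldots,i+r-3$. So the row set is $\{i_1,\ldots,i_{r-2},k_r,i_r,\ldots,i_{d-2}\}$; your version, listing $i_{r-1}$, has $d-1$ entries instead of $d-2$, which is where the apparent repeat came from.
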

\begin{proof}
Suppose $2\leq r\leq d-1$. Write 
\begin{eqnarray*}
I\setminus\{i+r-1\}=[i,i+d-1]\setminus\{i+r-1\}=[i,i+r-2]\sqcup [i+r,i+d-1]=\{i_1,\ldots,i_{d-1}\},
\end{eqnarray*}
where $i_1=i,\ldots,i_{r-1}=i+r-2,i_r=i+r,\ldots,i_{d-1}=i+d-1$.
Then, following the notations in Theorem~\ref{size-reduction},
\begin{eqnarray*}
&&[i_1,i_2-1]\times\ldots\times[i_{r-2},i_{r-1}-1]\times [i_{r-1},i_r-1]\times 
[i_r,i_{r+1}-1]\times\ldots\times [i_{d-2},i_{d-1}-1]=\\
&&\{i\}\times\ldots\times\{i+r-3\}\times\{i+r-2,i+r-1\}\times\{i+r\}
\times\ldots\times\{i+d-2\}.
\end{eqnarray*}
Therefore, $(k_2,\ldots,k_{d-1})$ is either equal to $(i,\ldots,i+r-3,i+r-2,i+r,\ldots,i+d-2)$, or else to 
$(i,\ldots,i+r-3,i+r-1,i+r,\ldots,i+d-2)$. Hence, by Theorem~\ref{size-reduction}, 
\begin{eqnarray*}
b^{I\setminus\{i+r-1\}}_J&=&
b^{\{i,\ldots,i+r-3,i+r-2,i+r,\ldots,i+d-2\}}_{\{j_2,\ldots,j_{d-1}\}-1}+
b^{\{i,\ldots,i+r-3,i+r-1,i+r,\ldots,i+d-2\}}_{\{j_2,\ldots,j_{d-1}\}-1}=\\
&&b^{[i,i+d-2]\setminus\{i+r-1\}}_{\{j_2,\ldots,j_{d-1}\}-1}+
b^{[i,i+d-2]\setminus\{i+r-2\}}_{\{j_2,\ldots,j_{d-1}\}-1}.
\end{eqnarray*}
If $r=1$, then, by Lemma \ref{lemmaIinterval}, $(b)$, 
$b^{I\setminus\{i+r-1\}}_J=b^{I\setminus\{i\}}_J=b^{[i+1,i+d-1]}_{\{0,j_2,\ldots,j_{d-1}\}}=
b^{[i+1,i+d-2]}_{\{j_2,\ldots,j_{d-1}\}-1}$. Similarly, if $r=d$, then, 
by Lemma \ref{lemmaIinterval}, $(b)$, 
$b^{I\setminus\{i+r-1\}}_J=b^{I\setminus\{i+d-1\}}_J=b^{[i,i+d-2]}_{\{0,j_2,\ldots,j_{d-1}\}}=
b^{[i,i+d-3]}_{\{j_2,\ldots,j_{d-1}\}-1}$. 
\end{proof}

\begin{theorem}[\sc Almost consecutive rows, consecutive columns]\label{Almostrow}
Let $d\geq 2$, let $I=[i,i+d-1]$, $J=[j,j+d-2]$, with $j\leq i$. If $1\leq r\leq d$, 
then
\begin{eqnarray*}
b^{I\setminus\{i+r-1\}}_J=\pi^{I\setminus\{i+r-1\}}_Jb_{d-1,r-1}.
\end{eqnarray*}
\end{theorem}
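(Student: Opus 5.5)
We first reduce to $j=0$ by Lemma~\ref{reductionJ<I}~$(c)$. Since $j\leq i$, we have $J=[j,j+d-2]\leq [i,i+d-2]=I\setminus\{i+d-1\}$, hence $J\leq I\setminus\{i+r-1\}$ for every $1\leq r\leq d$, by the chain of inequalities noted at the start of this section. Then
\begin{eqnarray*}
b^{I\setminus\{i+r-1\}}_J=\pi^{I\setminus\{i+r-1\}}_J\, b^{[i-j,i-j+d-1]\setminus\{i-j+r-1\}}_{[0,d-2]}.
\end{eqnarray*}
Setting $\tilde i=i-j\geq 0$, it therefore suffices to prove the claim
\begin{eqnarray*}
b^{[\tilde i,\tilde i+d-1]\setminus\{\tilde i+r-1\}}_{[0,d-2]}=b_{d-1,r-1}
\end{eqnarray*}
for all $d\geq 2$, $\tilde i\geq 0$ and $1\leq r\leq d$.

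We prove this claim by induction on $d$.

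\emph{Base case $d=2$.} The determinant is $1\times 1$ with column $0$, so it equals $1=b_{1,r-1}$ for $r=1,2$.

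\emph{Inductive step, $d\geq 3$.} We apply Lemma~\ref{lemma-binomialsum} with $J=[0,d-2]$, i.e. $j_2=1,\ldots,j_{d-1}=d-2$, so that $\{j_2,\ldots,j_{d-1}\}-1=[0,d-3]$. Its hypothesis $J\leq I\setminus\{i+d-1\}$ holds trivially.
\begin{itemize}
\item For $r=1$: the lemma gives $b^{[\tilde i+1,\tilde i+d-2]}_{[0,d-3]}$, which equals $1$ by Corollary~\ref{corollary-moh}, and $1=b_{d-1,0}$.
\item For $r=d$: the lemma gives $b^{[\tilde i,\tilde i+d-3]}_{[0,d-3]}=1=b_{d-1,d-1}$.
\item For $2\leq r\leq d-1$: the lemma gives
\begin{eqnarray*}
b^{[\tilde i,\tilde i+d-2]\setminus\{\tilde i+r-1\}}_{[0,d-3]}+b^{[\tilde i,\tilde i+d-2]\setminus\{\tilde i+r-2\}}_{[0,d-3]}.
\end{eqnarray*}
Both terms are instances of the claim with $d-1$ in place of $d$, removing the $r$-th and the $(r-1)$-th element respectively, with $1\leq r-1<r\leq d-1$. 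By the induction hypothesis they equal $b_{d-2,r-1}$ and $b_{d-2,r-2}$. Pascal's rule then gives $b_{d-2,r-1}+b_{d-2,r-2}=b_{d-1,r-1}$.
\end{itemize}

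Substituting the claim into the reduction yields $b^{I\setminus\{i+r-1\}}_J=\pi^{I\setminus\{i+r-1\}}_J\,b_{d-1,r-1}$, as stated.

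The main point needing care is the bookkeeping of the reduction. Subtracting $j$ sends $I\setminus\{i+r-1\}$ to $[\tilde i,\tilde i+d-1]\setminus\{\tilde i+r-1\}$. Lemma~\ref{reductionJ<I}~$(c)$ requires only $J\leq I\setminus\{i+r-1\}$, which we checked above. We also need $\tilde i\geq 0$ so that all row indices stay in $\mbn$. Beyond that, we must check that the shifted row sets in Lemma~\ref{lemma-binomialsum} are exactly intervals of length $d$ punctured at positions $r$ and $r-1$, so that the induction hypothesis applies.
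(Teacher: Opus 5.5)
Your proof is correct and is essentially the paper's own argument. You reduce to $j=0$ via Lemma~\ref{reductionJ<I}~$(c)$, then induct on $d$, using Lemma~\ref{lemma-binomialsum} and Pascal's rule for $2\leq r\leq d-1$. The only cosmetic difference is in the cases $r=1$ and $r=d$: the paper applies Lemma~\ref{lemmaIinterval}~$(c)$ directly to $[\tilde i+1,\tilde i+d-1]$ and $[\tilde i,\tilde i+d-2]$, while you go through the endpoint cases of Lemma~\ref{lemma-binomialsum} and Corollary~\ref{corollary-moh}. Also, in your closing remark the punctured intervals have length $d-1$, not $d$.
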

\begin{proof} By Lemma~\ref{reductionJ<I}, $(c)$,
$b^{I\setminus\{i+r-1\}}_J=\pi^{I\setminus\{i+r-1\}}_Jb^{I\setminus\{i+r-1\}-j}_{J-j}$, 
where 
\begin{eqnarray*}
I\setminus\{i+r-1\}-j=[i-j,i-j+d-1]\setminus\{i-j+r-1\}=[\tilde{i},\tilde{i}+d-1]\setminus\{\tilde{i}+r-1\},
\end{eqnarray*}
$\tilde{i}=i-j$, and $J-j=[0,d-2]$. Thus, it is enough to prove, by induction on $d\geq 2$, that 
$b^{[i,i+d-1]\setminus\{i+r-1\}}_{[0,d-2]}=b_{d-1,r-1}$, for all $1\leq r\leq d$. 

If $d=2$, then $I=[i,i+1]$ and $J=\{0\}$. If $r=1$, 
$b^{I\setminus\{i+r-1\}}_J=b^{I\setminus\{i\}}_J
=b_{i+1,0}=1$ and $b_{d-1,r-1}=b_{1,0}=1$. If $r=2$, $b^{I\setminus\{i+r-1\}}_J=b^{I\setminus\{i+1\}}_J
=b_{i,0}=1$ and $b_{d-1,r-1}=b_{1,1}=1$. 

Suppose that $d\geq 3$ and that the induction hypothesis holds, so that
$b^{[i,i+s-1]\setminus\{i+r-1\}}_{[0,s-2]}=b_{s-1,r-1}$, for all $2\leq s\leq d-1$ and all 
$1\leq r\leq s$. 

Observe that, for any $1\leq r\leq d$, then 
$[0,d-2]\leq [i,i+d-1]\setminus\{i+r-1\}$.

If $r=1$, then $I\setminus\{i+r-1\}=I\setminus\{i\}=[i+1,i+d-1]$ and,  
by Lemma~\ref{lemmaIinterval}, $(c)$, $b^{[i+1,i+d-1]}_{[0,d-2]}=1=b_{d-1,0}=b_{d-1,r-1}$. 
Similarly, if $r=d$, then $I\setminus\{i+r-1\}=I\setminus\{i+d-1\}=[i,i+d-2]$ and,  
by the same result, $b^{[i,i+d-2]}_{[0,d-2]}=1=b_{d-1,d-1}=b_{d-1,r-1}$. 

Assume that $2\leq r\leq d-1$. By Lemma~\ref{lemma-binomialsum}, 
$b_{[0,d-2]}^{I\setminus\{i+r-1\}}=
b^{[i,i+d-2]\setminus\{i+r-1\}}_{[0,d-3]}+
b^{[i,i+d-2]\setminus\{i+r-2\}}_{[0,d-3]}$. 
By the induction hypothesis, 
$b^{[i,i+d-2]\setminus\{i+r-1\}}_{[0,d-3]}=b_{d-2,r-1}$
and $b^{[i,i+d-2]\setminus\{i+r-2\}}_{[0,d-3]}=b_{d-2,r-2}$.
Thus, $b_{[0,d-2]}^{I\setminus\{i+r-1\}}=b_{d-2,r-2}+b_{d-2,r-1}=b_{d-1,r-1}$.
\end{proof}

As a consequence, we obtain the following result which is 
widely used in \cite[Section~5]{gp3}.

\begin{corollary}
Let $d\geq 2$, $I=[i,i+d-1]$, $J=[0,d-2]$. If $1\leq r\leq d$, then 
\begin{eqnarray*}
b^{I\setminus\{i+r-1\}}_J=b_{d-1,r-1}.
\end{eqnarray*}
\end{corollary}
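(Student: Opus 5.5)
This is the special case $j=0$ of Theorem~\ref{Almostrow}, so the plan is simply to specialize that theorem and then evaluate the $\pi$-factor. Take $J=[0,d-2]$, i.e.\ $j=0$. The hypothesis $j\leq i$ holds automatically since $i\in\mbn$. Theorem~\ref{Almostrow} then gives
\[
b^{I\setminus\{i+r-1\}}_J=\pi^{I\setminus\{i+r-1\}}_J\, b_{d-1,r-1}.
\]

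It remains to check that $\pi^{I\setminus\{i+r-1\}}_{[0,d-2]}=1$. By Notation~\ref{notation-binomial}, $\pi^{I'}_{J'}=1$ whenever the first element of $J'$ is $0$. Indeed, every factor $b_{i_s,0}$ and $b_{j_s,0}$ in the defining quotient equals $1$. Substituting this into the display yields $b^{I\setminus\{i+r-1\}}_J=b_{d-1,r-1}$ for all $1\leq r\leq d$.

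There is essentially no obstacle: the only thing to verify is that the hypotheses of Theorem~\ref{Almostrow} are met for $j=0$. Alternatively, one could observe that the proof of Theorem~\ref{Almostrow} itself establishes exactly the identity $b^{[i,i+d-1]\setminus\{i+r-1\}}_{[0,d-2]}=b_{d-1,r-1}$ by induction on $d$, using Lemma~\ref{lemma-binomialsum} and Pascal's rule. Either citation suffices.
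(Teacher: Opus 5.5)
Your proposal is correct and matches the paper's proof: specialize Theorem~\ref{Almostrow} to $j=0$, then use that $\pi^{I\setminus\{i+r-1\}}_J=1$ because $j_1=0$, since every factor in the quotient defining $\pi$ is $b_{\cdot,0}=1$. Your alternative remark is also accurate, because the proof of Theorem~\ref{Almostrow} reduces to exactly this identity and proves it by induction on $d$.
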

\begin{proof}
Since $j_1=0$, then $\pi^{I\setminus\{i+r-1\}}_J=1$ (see Notation~\ref{notation-binomial}).
The result follows from Theorem~\ref{Almostrow}.
\end{proof}

\section{Rank and left nullspaces of some binomials matrices}\label{sectionNulls}

This section is devoted to calculate the left nullspaces of some binomial matrices. 
This has strong connections with the work in \cite{gp3} 
(see, e.g., \cite[Lemma~3.5, $(3)$]{gp3}). Next we prove the main result of the section. 

\begin{theorem}\label{theorem-bingoods}
Let $d\geq 3$, $I=[i,i+d-1]$ and 
$J=\{0\}\sqcup [j,j+d-3]$, with 
$J\leq I\setminus\{i+d-1\}$. 
Set $\lambda=(\prod_{k=1}^{d-1}b_{i+k-1,j-1})/
(\prod_{k=0}^{d-3}b_{j+k-1,j-1})$. 
\begin{itemize}
\item[$(a)$] If $2\leq r\leq d-1$, then 
\begin{eqnarray*}
b^{I\setminus\{i+r-1\}}_J=\lambda\left(
\frac{b_{d-2,r-1}}{b_{i+r-1,j-1}}+
\frac{b_{d-2,r-2}}{b_{i+r-2,j-1}}\right).
\end{eqnarray*}
\item[$(b)$] If $r=1$ or $r=d$, then 
\begin{eqnarray*}
b^{I\setminus\{i\}}_J=\frac{\lambda}{b_{i,j-1}}
\phantom{+}\mbox{ and }\phantom{+}
b^{I\setminus\{i+d-1\}}_J=\frac{\lambda}{b_{i+d-2,j-1}}.
\end{eqnarray*}
\end{itemize}
\end{theorem}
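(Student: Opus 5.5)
The plan is to peel off the column $0$ with Lemma~\ref{lemma-binomialsum} and then evaluate the resulting determinants with Theorem~\ref{Almostrow} (and Corollary~\ref{corollary-moh} in the extreme cases). Write $J=\{0,j_2,\ldots,j_{d-1}\}$ with $j_k=j+k-2$, so $\{j_2,\ldots,j_{d-1}\}-1=[j-1,j+d-4]$, an interval of $d-2$ elements.

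First I check the hypotheses. Since $J$ is strictly increasing and starts at $0$, we have $j\geq 1$, so $j-1\in\mbn$. From $J\leq I\setminus\{i+d-1\}=[i,i+d-2]$ we get $j\leq i+1$, that is, $j-1\leq i$. Lemma~\ref{lemma-binomialsum} then gives, for $2\leq r\leq d-1$,
\begin{eqnarray*}
b^{I\setminus\{i+r-1\}}_J=b^{[i,i+d-2]\setminus\{i+r-1\}}_{[j-1,j+d-4]}+b^{[i,i+d-2]\setminus\{i+r-2\}}_{[j-1,j+d-4]}.
\end{eqnarray*}
For $r=1$ it gives $b^{[i+1,i+d-2]}_{[j-1,j+d-4]}$, and for $r=d$ it gives $b^{[i,i+d-3]}_{[j-1,j+d-4]}$.

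Next, for $2\leq r\leq d-1$, I apply Theorem~\ref{Almostrow} with $d$ replaced by $d-1\geq 2$, row interval $[i,i+d-2]$ and column interval $[j-1,(j-1)+(d-1)-2]$. The requirement $j-1\leq i$ was checked above. The first summand is the case with index $r$, so it equals $\pi^{[i,i+d-2]\setminus\{i+r-1\}}_{[j-1,j+d-4]}\,b_{d-2,r-1}$. The second summand is the case with index $r-1\geq 1$, so it equals $\pi^{[i,i+d-2]\setminus\{i+r-2\}}_{[j-1,j+d-4]}\,b_{d-2,r-2}$. In the extreme cases $r=1$ and $r=d$ both rows and columns are intervals, so Corollary~\ref{corollary-moh} gives that each determinant equals its $\pi$.

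The remaining step, which is the only real computation, is to identify each $\pi$ with the stated expression. By definition, for a row set $[i,i+d-2]\setminus\{m\}$ and columns $[j-1,j+d-4]$,
\begin{eqnarray*}
\pi=\frac{\prod_{k=1,\,i+k-1\neq m}^{d-1}b_{i+k-1,j-1}}{\prod_{k=0}^{d-3}b_{j-1+k,j-1}}.
\end{eqnarray*}
Here the denominator runs over the column entries $j-1+k$ paired with the column $j-1$. This matches the denominator $\prod_{k=0}^{d-3}b_{j+k-1,j-1}$ of $\lambda$. The numerator is the numerator of $\lambda$ with the factor $b_{m,j-1}$ removed. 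That factor is nonzero because $m\geq i\geq j-1$, so $\pi=\lambda/b_{m,j-1}$.

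Taking $m=i+r-1$ and $m=i+r-2$ yields $(a)$. Taking $m=i$, where the row set is $[i+1,i+d-2]$, and $m=i+d-2$, where the row set is $[i,i+d-3]$, yields $(b)$. The main care needed is this index bookkeeping, together with confirming that $j\geq 1$ and $j-1\leq i$, which guarantee that the earlier results apply and that no denominator vanishes.
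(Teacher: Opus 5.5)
Your proposal is correct and follows essentially the same route as the paper: peel off column $0$ with Lemma~\ref{lemma-binomialsum}, evaluate the middle cases with Theorem~\ref{Almostrow} (size $d-1$, columns $[j-1,j+d-4]$) and the extreme cases with Corollary~\ref{corollary-moh}, then read off each $\pi$ as $\lambda/b_{m,j-1}$. The only cosmetic difference is that for $r=1,d$ the paper invokes Lemma~\ref{lemmaIinterval}$(b)$ directly instead of the extreme cases of Lemma~\ref{lemma-binomialsum}, and your explicit checks that $1\leq j\leq i+1$ and $b_{m,j-1}\neq 0$ state hypotheses the paper uses only implicitly.
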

\begin{proof}
Suppose that $2\leq r\leq d-1$. 
Using Lemma~\ref{lemma-binomialsum} and Theorem~\ref{Almostrow}, we get:
\begin{eqnarray*}
b^{I\setminus\{i+r-1\}}_J&=&
b^{[i,i+d-2]\setminus\{i+r-1\}}_{[j-1,j+d-4]}+
b^{[i,i+d-2]\setminus\{i+r-2\}}_{[j-1,j+d-4]}=\\
&&\pi^{[i,i+d-2]\setminus\{i+r-1\}}_{[j-1,j+d-4]}\cdot b_{d-2,r-1}+
\pi^{[i,i+d-2]\setminus\{i+r-2\}}_{[j-1,j+d-4]}\cdot b_{d-2,r-2}=\\ 
&&\frac{\prod_{k=1,k\neq r}^{d-1}b_{i+k-1,j-1}}
{\prod_{k=0}^{d-3}b_{j+k-1,j-1}}\cdot b_{d-2,r-1}+
\frac{\prod_{k=1,k\neq r-1}^{d-1}b_{i+k-1,j-1}}
{\prod_{k=0}^{d-3}b_{j+k-1,j-1}}\cdot b_{d-2,r-2}=\\&&
\frac{\prod_{k=1}^{d-1}b_{i+k-1,j-1}}{\prod_{k=0}^{d-3}b_{j+k-1,j-1}}
\left(\frac{b_{d-2,r-1}}{b_{i+r-1,j-1}}+\frac{b_{d-2,r-2}}{b_{i+r-2,j-1}}\right)=
\lambda\left(\frac{b_{d-2,r-1}}{b_{i+r-1,j-1}}+\frac{b_{d-2,r-2}}{b_{i+r-2,j-1}}\right).
\end{eqnarray*}
This shows $(a)$. Suppose that $r=1$. By Lemma~\ref{lemmaIinterval}, $(b)$, and 
Corollary~\ref{corollary-moh}, then 
\begin{eqnarray*}
b^{[i,i+d-1]\setminus\{i\}}_{\{0\}\cup[j,j+d-3]}=
b^{[i+1,i+d-1]}_{\{0\}\cup[j,j+d-3]}=
b^{[i+1,i+d-2]}_{[j-1,j+d-4]}=
\frac{\prod_{k=2}^{d-1}b_{i+k-1,j-1}}{\prod_{k=0}^{d-3}b_{j+k-1,j-1}}=\frac{\lambda}{b_{i,j-1}}.
\end{eqnarray*}
Similarly, if $r=d$, by Lemma~\ref{lemmaIinterval}, $(b)$, and Corollary~\ref{corollary-moh},
then 
\begin{eqnarray*}
b^{[i,i+d-1]\setminus\{i+d-1\}}_{\{0\}\cup[j,j+d-3]}=
b^{[i,i+d-2]}_{\{0\}\cup[j,j+d-3]}=
b^{[i,i+d-3]}_{[j-1,j+d-4]}=
\frac{\prod_{k=1}^{d-2}b_{i+k-1,j-1}}{\prod_{k=0}^{d-3}b_{j+k-1,j-1}}=\frac{\lambda}{b_{i+d-2,j-1}}.
\end{eqnarray*}
\vspace*{-0.3cm}
\end{proof}


Let $\mbk$ be a field of characteristic zero and let $u_1,\ldots,u_d$ denote the natural basis of $\mbk^d$. For any vector $x=x_1u_1+\ldots+x_du_d\in\mbk^d$, $x_i\in\mbk$, let $X$ denote the 
column vector of its components, so $X^{\top}=(x_1,\ldots,x_d)$. 
Let us calculate a basis for the left nullspace of some binomial matrices $B^I_J$, where $\card(I)=d$.
We understand by the left nullspace of an $d\times e$ matrix $B$ to the kernel 
$\ker(B^\top)=\{x\in\mbk^d\mid X^{\top}B=0\}$.

\begin{corollary}\label{cor-ker}
Let $d\geq 3$, $I=[i,i+d-1]$ and $J=\{0\}\sqcup [j,j+d-3]$,
with $J\leq I\setminus\{i+d-1\}$. 
Then the left nullspace of $B^I_J$ is generated by the vector
\begin{eqnarray*}
\frac{1}{b_{i,j-1}}u_1+
\sum_{r=2}^{d-1}(-1)^{r-1}
\left(
\frac{b_{d-2,r-1}}{b_{i+r-1,j-1}}+
\frac{b_{d-2,r-2}}{b_{i+r-2,j-1}}\right)u_r+
(-1)^{d-1}\frac{1}{b_{i+d-2,j-1}}u_d.
\end{eqnarray*}
\end{corollary}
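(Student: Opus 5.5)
The plan is to use the standard cofactor description of the left nullspace of a $d\times(d-1)$ matrix of full rank. The matrix $B^I_J$ has $d$ rows and $\card(J)=d-1$ columns. Consider the $d\times d$ matrix obtained by appending to $B^I_J$ an arbitrary column, placed first. Expanding its determinant along that column shows that the vector $v=\sum_{r=1}^d(-1)^{r-1}b^{I\setminus\{i+r-1\}}_J\,u_r$ satisfies $V^\top C=\det(C\,|\,B^I_J)$ for any column $C$. Taking $C$ to be any column of $B^I_J$ gives a matrix with a repeated column, hence determinant zero. So $V^\top B^I_J=0$, that is, $v$ lies in the left nullspace.

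Next I would determine the dimension of the nullspace. Since $J\leq I\setminus\{i+d-1\}$, Corollary~\ref{cor-positivity} gives $b^{I\setminus\{i+d-1\}}_J>0$. Thus $B^I_J$ has a nonzero $(d-1)$-minor, its rank is $d-1$, and its left nullspace has dimension exactly $1$. The same observation shows $v\neq 0$, since its last coordinate is $\pm$ a positive number. (In fact, by the remark opening Section~\ref{sectionAlmostI}, all coordinates are nonzero.) Hence $v$ generates the nullspace.

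It then remains to identify $v$ with the displayed vector up to a nonzero scalar. Theorem~\ref{theorem-bingoods} gives exactly the minors needed. For $2\leq r\leq d-1$,
\[
b^{I\setminus\{i+r-1\}}_J=\lambda\Bigl(\frac{b_{d-2,r-1}}{b_{i+r-1,j-1}}+\frac{b_{d-2,r-2}}{b_{i+r-2,j-1}}\Bigr),
\]
and for the end coordinates $b^{I\setminus\{i\}}_J=\lambda/b_{i,j-1}$ and $b^{I\setminus\{i+d-1\}}_J=\lambda/b_{i+d-2,j-1}$. Therefore $v=\lambda w$, where $w$ is the vector in the statement.

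The only delicate point is checking that $\lambda\neq 0$ and that the denominators make sense. Here I would argue that $J\leq I\setminus\{i+d-1\}$ forces $j\leq i+1$. This gives $j-1\leq i\leq i+k-1$, so every $b_{i+k-1,j-1}$ is positive, and so is every $b_{j+k-1,j-1}$. (I would also handle separately the degenerate case $j=1$, where these coefficients equal $1$.) Consequently $\lambda>0$ and $w=\lambda^{-1}v$ generates the left nullspace. Since we work over a field of characteristic zero, all these rational quantities are nonzero in $\mbk$.
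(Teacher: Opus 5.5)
Your proposal is correct and follows the paper's proof. Both arguments get rank $d-1$ from the positivity of the minor $b^{[i,i+d-2]}_J$ (Corollary~\ref{cor-positivity}), take the signed-minor vector $\sum_{r}(-1)^{r-1}b^{I\setminus\{i+r-1\}}_J u_r$ as a generator of the one-dimensional left nullspace, and use Theorem~\ref{theorem-bingoods} to identify it with $\lambda$ times the displayed vector; your cofactor-expansion derivation and your check that $\lambda>0$ (via $1\leq j\leq i+1$) spell out details the paper leaves implicit.
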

\begin{proof}
Since $\card(I)=d$ and $\card(J)=d-1$, $B^I_J$ is a $d\times (d-1)$ matrix.
Applying Corollary~\ref{cor-positivity} to $J\leq I\setminus\{i+d-1\}=[i,i+d-2]$,
we deduce that $b^{[i,i+d-2]}_J>0$.
Thus, $\rank(B^{[i,i+d-1]}_J)=d-1$ and $\dim\ker((B^I_J)^{\top})=1$. 
By Cramer's rule, $\ker((B^I_J)^{\top})=\se\sum_{r=1}^{d}(-1)^{r-1}b^{I\setminus\{i+r-1\}}_Ju_r\sd$. 
Then the result follows from Theorem~\ref{theorem-bingoods}. 
\end{proof}

We illustrate the result above with two examples. 

\begin{example}
Let $d\geq 2$, $I=[0,d-1]$ and $J=[0,d-2]$. 
Then the left nullspace of $B^I_J$ is generated by the vector 
\begin{eqnarray*}
\sum_{r=1}^{d}(-1)^{r-1}b_{d-1,r-1}u_r.
\end{eqnarray*}
\end{example}
\begin{proof}
If $d\geq 3$, take $i=0$ and $j=1$ in Corollary~\ref{cor-ker}. Then,
$b_{i,j-1}=b_{0,0}=1$, $b_{i+r-1,j-1}=b_{r-1,0}=1$ and $b_{i+r-2,j-1}=b_{r-2,0}=1$,
for all $2\leq r\leq d-1$, and $b_{i+d-2,j-1}=b_{d-2,0}=1$.
Moreover, $b_{d-2,r-1}+b_{d-2,r-2}=b_{d-1,r-1}$, for all $2\leq r\leq d-1$. If $d=2$, then 
$\sum_{r=1}^{2}(-1)^{r-1}b_{d-1,r-1}u_r=u_1-u_2$, which is clearly a basis of the left nullspace of $B^I_J$. 
\end{proof}

\begin{example}
Let $d\geq 3$, $I=[1,d]$ and $J=\{0\}\sqcup [2,d-1]$.
Then the left nullspace of $B^I_J$ is generated by the vector
\begin{eqnarray*}
b_{d-1,1}u_1+\sum_{r=2}^{d}(-1)^{r-1}b_{d,r}u_r.
\end{eqnarray*}
\end{example}
\begin{proof}
Take $i=1$ and $j=2$ in Corollary~\ref{cor-ker}. Then 
$b_{i,j-1}=b_{1,1}=1$, $b_{i+r-1,j-1}=b_{r,1}=r$, $b_{i+r-2,j-1}=b_{r-1,1}=r-1$ and
$b_{i+d-2,j-1}=b_{d-1,1}=d-1$. Therefore,
the left nullspace of $B^I_J$ is generated by the vector
\begin{eqnarray*}
u_1+\sum_{r=2}^{d-1}(-1)^{r-1}\left(\frac{b_{d-2,r-1}}{r}+\frac{b_{d-2,r-2}}{r-1}\right)u_r+(-1)^{d-1}\frac{1}{d-1}u_d.
\end{eqnarray*}
Multiplying this vector by $d-1$ and using that $b_{p,q}=(p/q)b_{p-1,q-1}$ and 
$b_{d-1,r}+b_{d-1,r-1}=b_{d,r}$, one gets:
\begin{eqnarray*}
&&(d-1)u_1+\sum_{r=2}^{d-1}(-1)^{r-1}
\left(\frac{d-1}{r}b_{d-2,r-1}+\frac{d-1}{r-1}b_{d-2,r-2}\right)u_r+(-1)^{d-1}u_d=\\
&&b_{d-1,1}u_1+\sum_{r=2}^{d-1}(-1)^{r-1}b_{d,r}u_r+(-1)^{d-1}b_{d,d}u_d=
b_{d-1,1}u_1+\sum_{r=2}^{d}(-1)^{r-1}b_{d,r}u_r.
\end{eqnarray*}
\end{proof}

\begin{remark}
In Theorem~\ref{theorem-bingoods} one suposes that $J\leq I\setminus\{i+d-1\}$
in order to ensure that the binomial coefficients appearing in the proof are non-zero. 
However, similar results can be obtained removing such hypothesis. 
\end{remark}

\begin{example}
Let $d\geq 2$, $I=[0,d-1]$ and $J=\{0\}\sqcup [2,d-1]$. 
Then the left nullspace of $B^I_J$ is generated by the vector $u_1-u_2$.
\end{example}
\begin{proof}
Clearly, $B^I_J$ is a $d\times (d-1)$ matrix and $J\leq [1,d-1]=I\setminus\{0\}$. 
By Corollary~\ref{cor-positivity}, $b^{I\setminus\{0\}}_J>0$. 
Thus, $\rank(B^{I}_J)=d-1$ and $\dim\ker((B^I_J)^{\top})=1$. 
By Cramer's rule, $\ker((B^I_J)^{\top})=\se\sum_{r=1}^{d}(-1)^{r-1}b^{I\setminus\{r-1\}}_Ju_r\sd$. 
If $r=1$, by Lemmas~\ref{lemmaIinterval} and \ref{reductionJ<I}, 
$b^{I\setminus\{r-1\}}_J=b^{[1,d-1]}_{\{0\}\sqcup [2,d-1]}=b^{[1,d-2]}_{[1,d-2]}=1$.
If $r=2$, by Lemma~\ref{reductionJ<I}, 
$b^{I\setminus\{r-1\}}_J=b^{\{0\}\sqcup [2,d-1]}_{\{0\}\sqcup [2,d-1]}=1$. If $r\geq 3$, then 
$J=\{0,2,\ldots,d-1\}\not\leq [0,d-1]\setminus\{r-1\}$. 
By Lemma~\ref{reductionJ<I} again, $b^{I\setminus\{r-1\}}_J=0$. 
\end{proof}

\section{Interchanging rows and columns}

We finish the present paper with a result of Gessel and Viennot about interchanging rows and columns and 
the effect it may have on some former results. 
Given $I=\{i_1,\ldots,i_d\}$, $J=\{j_1,\ldots,j_d\}$ and $n\geq j_d,i_d$, let
\begin{eqnarray*}
q^J_I(n)=\frac{b_{n,j_1}\cdots b_{n,j_d}}{b_{n,i_1}\cdots b_{n,i_d}}.
\end{eqnarray*}
Note that $q^J_I(n)q^I_J(n)=1$. Next result, shown in \cite[Proposition~14]{gv}, 
says how to interchange rows and columns in a binomial determinant. We prove it here, 
with our notations, for the sake of completeness. 

\begin{proposition}
Let $I=\{i_1,\ldots,i_d\}$, $J=\{j_1,\ldots,j_d\}$, 
with $J\leq I$, and let $n\geq i_d$. Then 
\begin{eqnarray*}
b^I_J=q^J_I(n)b^{n-J}_{n-I}.
\end{eqnarray*}
\end{proposition}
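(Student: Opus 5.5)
The approach is to rescale rows and columns of $B^I_J$ so that it becomes, up to scalar factors, the transpose of a matrix whose entries are those of $B^{n-J}_{n-I}$. The starting point is the identity
\begin{eqnarray*}
\binom{i}{j}\binom{n}{i}=\binom{n}{j}\binom{n-j}{n-i},
\end{eqnarray*}
valid for all $0\leq i,j\leq n$ (both sides vanish when $j>i$, and otherwise both equal $n!/(j!\,(i-j)!\,(n-i)!)$). Applied entrywise, it gives
\begin{eqnarray*}
b_{i_r,j_s}=\frac{b_{n,j_s}}{b_{n,i_r}}\, b_{n-j_s,\,n-i_r}.
\end{eqnarray*}
This requires $n\geq i_d$, and it also needs $n\geq j_d$. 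The latter follows from $J\leq I$, since $j_d\leq i_d\leq n$. All binomials $b_{n,i_r}$ are therefore nonzero.

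Next we factor. From row $r$ we pull out $1/b_{n,i_r}$, and from column $s$ we pull out $b_{n,j_s}$. By multilinearity,
\begin{eqnarray*}
b^I_J=\frac{b_{n,j_1}\cdots b_{n,j_d}}{b_{n,i_1}\cdots b_{n,i_d}}\det\bigl(b_{n-j_s,n-i_r}\bigr)_{r,s}=q^J_I(n)\det\bigl(b_{n-j_s,n-i_r}\bigr)_{r,s}.
\end{eqnarray*}
The remaining matrix $M=(b_{n-j_s,n-i_r})_{r,s}$ has $(r,s)$ entry equal to the binomial with row index $n-j_s$ and column index $n-i_r$. Hence $M^{\top}$ is the submatrix of $B$ with rows $n-j_1>\ldots>n-j_d$ and columns $n-i_1>\ldots>n-i_d$, listed in decreasing order.

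To finish, we reverse both the row order and the column order of $M^{\top}$ to reach the increasing order used in $B^{n-J}_{n-I}$. Each reversal is the same permutation, of sign $(-1)^{d(d-1)/2}$, so the two signs cancel. Taking transposes does not change the determinant, so $\det M=b^{n-J}_{n-I}$, which gives $b^I_J=q^J_I(n)\,b^{n-J}_{n-I}$.

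The only delicate point is the bookkeeping in this last step: checking that the two reversals contribute equal signs, and that the index sets $n-J$ and $n-I$ fit the conventions of Notation~\ref{notation-binomial}. That requires $n\geq i_d$ and $n\geq j_d$, both of which hold.
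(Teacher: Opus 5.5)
Your proof is correct and follows the paper's argument: the same identity $b_{n,p}b_{p,q}=b_{n,q}b_{n-q,n-p}$, the same row and column factoring, and the same transpose with both orders reversed (the paper's conjugation by the exchange matrix $\Theta_d$). The only differences are that you factor before reversing rather than after, and that you explicitly check the identity also holds when $j>i$ (both sides vanish). That check is a small improvement, since the paper states the identity only for $q\leq p\leq n$ but applies it to every entry.
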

\begin{proof}
Let $\Theta_d$ be the $d\times d$ exchange matrix, that is, 
the $d\times d$ antidiagonal matrix with 1's on the diagonal going from the lower left corner 
to the upper right corner. 
Since $\Theta^2=1$, it follows that $b^I_J=\det(\Theta_d(B^I_J)^{\top}\Theta_d)$, where
\begin{eqnarray*}
\Theta_d(B^I_J)^{\top}\Theta_d=
\left(\begin{array}{ccc}0&\ldots&1\\\vdots&&\vdots\\1&\ldots&0\end{array}\right)\!
\left(\begin{array}{ccc}
b_{i_1,j_1}&\ldots&b_{i_d,j_1}\\\vdots&&\vdots\\b_{i_1,j_d}&\ldots&b_{i_d,j_d}\end{array}\right)\!
\left(\begin{array}{ccc}0&\ldots&1\\\vdots&&\vdots\\1&\ldots&0\end{array}\right)=
\left(\begin{array}{ccc}
b_{i_d,j_d}&\ldots&b_{i_1,j_d}\\\vdots&&\vdots\\
b_{i_d,j_1}&\ldots&b_{i_d,j_1}
\end{array}\right).
\end{eqnarray*}
It is easy to check that, for $q\leq p\leq n$, then
$b_{n,p}b_{p,q}=b_{n,q}b_{n-q,n-p}$, so 
$b_{p,q}=b_{n,q}b_{n-q,n-p}/b_{n,p}$. It follows that
\begin{eqnarray*}
b^I_J=\left|\begin{array}{ccc}
\frac{b_{n,j_d}}{b_{n,i_d}}b_{n-j_d,n-i_d}&\ldots&
\frac{b_{n,j_d}}{b_{n,i_1}}b_{n-j_d,n-i_1}
\\\vdots&&\vdots\\
\frac{b_{n,j_1}}{b_{n,i_d}}b_{n-j_1,n-i_d}&\ldots&
\frac{b_{n,j_1}}{b_{n,i_1}}b_{n-j_1,n-i_1}
\end{array}\right|=\frac{b_{n,j_1}\cdots b_{n,j_d}}
{b_{n,i_1}\cdots b_{n,i_d}}b^{n-J}_{n-I}=q^{J}_{I}(n)\cdot b^{n-J}_{n-I}.
\end{eqnarray*}
\end{proof}

As an immediate consequence, we get:

\begin{corollary}
Let $I=\{i_1,\ldots,i_d\}$, $J=\{j_1,\ldots,j_d\}$, 
with $J\leq I$, and let $m\geq n\geq i_d$. Then 
\begin{eqnarray*}
b^I_J=q^J_I(n)q^{n-I}_{n-J}(m)b^{I+m-n}_{J+m-n}.
\end{eqnarray*}
\end{corollary}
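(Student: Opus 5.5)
The plan is to apply the preceding Proposition twice and then simplify the resulting product of $q$'s.

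First, from $J\leq I$ and $n\geq i_d$, the Proposition gives $b^I_J=q^J_I(n)\,b^{n-J}_{n-I}$. Next I check that the new pair satisfies the Proposition's hypotheses, with $n-J$ now playing the role of the rows and $n-I$ the role of the columns. Written increasingly, $n-I=\{n-i_d,\ldots,n-i_1\}$ and $n-J=\{n-j_d,\ldots,n-j_1\}$. Since $j_k\leq i_k$ for every $k$, we get $n-i_k\leq n-j_k$, so $n-I\leq n-J$. The largest row index is $n-j_1$, which is at most $n\leq m$, so the Proposition applies with the integer $m$. It yields
\[
b^{n-J}_{n-I}=q^{n-I}_{n-J}(m)\,b^{m-(n-I)}_{m-(n-J)}.
\]
Finally, $m-(n-I)=\{m-n+i_1,\ldots,m-n+i_d\}=I+m-n$ and similarly $m-(n-J)=J+m-n$. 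Substituting gives the claimed formula.

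There is no real obstacle. The only care needed is bookkeeping: confirming that the roles of upper and lower index sets in $q$ match the Proposition, that is, $q^{\text{columns}}_{\text{rows}}$ for the pair (rows $n-J$, columns $n-I$), so the factor is $q^{n-I}_{n-J}(m)$. One must also check that $q^{n-I}_{n-J}(m)$ is well defined, which holds because $m\geq n-j_1$ and $m\geq n-i_1$. Note also that $I+m-n$ simply denotes the translate upward, since $m\geq n$.
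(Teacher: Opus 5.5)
Your proof is correct and is exactly the argument the paper intends when it calls the corollary an immediate consequence of the Proposition: apply it once with $(I,J,n)$, then again to the pair with rows $n-J$, columns $n-I$ and integer $m$, and observe that $m-(n-I)=I+m-n$ and $m-(n-J)=J+m-n$. The paper gives no written proof, and your checks that $n-I\leq n-J$, that $m\geq n-j_1$, and that $q^{n-I}_{n-J}(m)$ is well defined supply exactly the hypothesis verifications it leaves implicit.
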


As a consequence of Theorems~\ref{Jinterval} and \ref{Iinterval} and the interchanging procedure we obtain the following formula. 

\begin{corollary}
Let $I=[i,i+d-1]$ and $J=\{j_1,\ldots,j_d\}$. Let $n\geq i+d-1$. Then,
\begin{eqnarray*}
\pi^I_J\cdot\pi^{I^{(1)}}_{J^{(1)}}\cdots\pi^{I^{(d-1)}}_{J^{(d-1)}}=
q^J_I(n)\pi^{n-J}_{n-I}\frac{\prod_{1\leq k\leq \ell<d}(j_{\ell}-j_k)}{\prod_{k=0}^{d-1}k!}.
\end{eqnarray*}
\end{corollary}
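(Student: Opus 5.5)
The plan is to evaluate $b^I_J$ in two different ways and compare. Throughout I assume $J\leq I$, since the pairs $(I^{(k)},J^{(k)})$ of Notation~\ref{notation-Ik}, and hence the left-hand side, are only defined under this hypothesis.

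First, since $I=[i,i+d-1]$ is an interval and $J\leq I$, Theorem~\ref{Iinterval} (with $k=d-1$) identifies the left-hand side with the determinant itself:
\begin{eqnarray*}
\pi^I_J\cdot\pi^{I^{(1)}}_{J^{(1)}}\cdots\pi^{I^{(d-1)}}_{J^{(d-1)}}=b^I_J .
\end{eqnarray*}

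Second, since $n\geq i+d-1=i_d$, the interchanging Proposition gives $b^I_J=q^J_I(n)\,b^{n-J}_{n-I}$. The key observation is that the new column set is an interval:
\begin{eqnarray*}
n-I=[n-i-d+1,\,n-i].
\end{eqnarray*}
The new row set is $n-J=\{n-j_d,\ldots,n-j_1\}$, listed in increasing order. Theorem~\ref{Jinterval} has no hypothesis other than the columns being consecutive, so it applies and yields
\begin{eqnarray*}
b^{n-J}_{n-I}=\pi^{n-J}_{n-I}\cdot\frac{\prod_{1\leq k<\ell\leq d}(i'_\ell-i'_k)}{\prod_{k=0}^{d-1}k!},
\end{eqnarray*}
where $i'_k=n-j_{d+1-k}$ is the $k$-th smallest element of $n-J$.

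It remains to rewrite the Vandermonde factor in terms of $J$. For $k<\ell$ we have $i'_\ell-i'_k=j_{d+1-k}-j_{d+1-\ell}$. The map $(k,\ell)\mapsto(d+1-\ell,\,d+1-k)$ is a bijection of the set of pairs $k<\ell$ onto itself, so the product equals $\prod_{1\leq k<\ell\leq d}(j_\ell-j_k)$. This is the Vandermonde product in the $j$'s that appears in the statement; I read its index range, written there as $k\leq \ell<d$, as the intended $1\leq k<\ell\leq d$. Combining the two evaluations of $b^I_J$ gives the formula.

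The only delicate points are bookkeeping: ordering $n-J$ and $n-I$ increasingly, so that the orientation used in the interchanging Proposition matches the conventions of Theorem~\ref{Jinterval}, and checking the sign of the Vandermonde product after the index reversal. Both are routine.
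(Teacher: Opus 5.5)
Your proposal is correct and is exactly the argument the paper intends. The paper gives no written proof, only the remark that the formula follows from Theorems~\ref{Jinterval} and \ref{Iinterval} and the interchanging Proposition; you evaluate $b^I_J$ once by Theorem~\ref{Iinterval}, and once by $b^I_J=q^J_I(n)\,b^{n-J}_{n-I}$ followed by Theorem~\ref{Jinterval} applied to the interval $n-I$. Your added hypothesis $J\leq I$ and your reading of the index range as $1\leq k<\ell\leq d$ are both right; read literally, $k\leq\ell$ would make the right-hand side vanish.
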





{\small
\begin{thebibliography}{cc}
\bibitem{gv}{I. Gessel, G. Viennot, Binomial determinants, paths, and hook 
length formulae. Adv. in Math. {\bf 58} (1985), no.~3, 300-321.}
\bibitem{gp1}{L. Gonz\'alez, F. Planas-Vilanova, Prime ideals of Moh and the characteristic of the field. J. Algebra Appl. 
{https://www.worldscientific.com/doi/10.1142/S0219498827501702}}
\bibitem{gp3}{L. Gonz\'alez, F. Planas-Vilanova, Explicit minimal generating sets of a family of prime
ideals with unbounded minimal number of generators in a three-dimensional power series ring. In preparation.}
\bibitem{mss}{R. Mehta, J. Saha, I. Sengupta, 
 Moh's example of algebroid space curves. J. Symbolic Comput. {\bf 104} (2021), 
 168-182.}
\bibitem{moh1}{T.T. Moh, On the unboundedness of generators of prime
  ideals in power series rings of three variables. J. Math. Soc. Japan
  {\bf 26} (1974), 722-734}
\end{thebibliography}
}
\end{document}